      \theoremstyle{plain}
      \newtheorem{theorem}{Theorem}[section]
      \newtheorem{lemma}[theorem]{Lemma}
      \newtheorem{proposition}[theorem]{Proposition}
      \theoremstyle{definition}
      \theoremstyle{remark}
      \newcommand{\R}{{\mathbb R}}
      \newcommand{\C}{{\mathbb C}}
      \newcommand{\Q}{{\mathbb Q}}
      \newcommand{\N}{{\mathbb N}}
      \newcommand{\Z}{{\mathbb Z}}
      \newfont{\cmbsy}{cmbsy10}
      \newfont{\cmmib}{cmmib10}
      \newcommand{\Orden}{\mathop{\hbox{\cmbsy O}}}
      \def\Real{\mathrm{Re\,}}
      \def\Imag{\mathrm{Im\,}}
      \def\@setcopyright{}
      \def\serieslogo@{}
\begin{document}
%


   
   \author{Juan Arias de Reyna$\,\,$}

   \address{Facultad de Matem\'aticas, 
   Universidad de Sevilla, \newline
   Apdo.~1160, 41080-Sevilla, Spain}
   \email{arias@us.es}


   \author{$\,\,$Jan van de Lune }
   \address{Langebuorren 49, 9074 CH Hallum, The Netherlands\newline (Formerly at
   CWI, Amsterdam ) }
   \email{j.vandelune@hccnet.nl}



   \title[Some bounds in the theory of Riemann's zeta function]
   {Some bounds and limits in the theory of Riemann's zeta function}









\maketitle

\begin{abstract}

For any real $a>0$ we determine the supremum of the real $\sigma$ such that 
$\zeta(\sigma+it)=a$ for some real $t$. For $0 < a < 1$, $a = 1$, and 
$a > 1$ the results turn out to be quite different.

We also determine the supremum $E$ of the real parts of the `turning points', that is points 
$\sigma+it$ 
where a curve $\Imag\zeta(\sigma+it)=0$ has a vertical tangent.  
This 
supremum $E$ (also considered by Titchmarsh)  coincides with the supremum 
of the real $\sigma$ such that 
$\zeta'(\sigma+it)=0$ for some real $t$.

We find a surprising connection between the three indicated problems: $\zeta(s)=1$, 
$\zeta'(s)=0$ and turning points of $\zeta(s)$. The almost extremal values for these three 
problems appear to be located at approximately  the same height.  
\end{abstract}


   \section{Introduction.}
   
   In this paper we study various bounds and limits related to the values of  
   Riemann's $\zeta(s)=\zeta(\sigma+it)$ with $s$ in the half-plane
   $\sigma>1$.  
   For example, in Titchmarsh \cite[Theorem 11.5(C)]{T2}
   it is shown 
   that $E:=$ the supremum of all $\sigma$ such that 
   $\zeta'(\sigma+it)=0$ for some $t\in\R$,
   satisfies $2<E<3$.
   Also, one of us \cite{L} proved that $\sigma_0:=$
   the unique solution to the equation 
   $\sum_p\arcsin(p^{-\sigma})=\frac{\pi}{2}$,  
   is the supremum of all $\sigma$ such that $\Real\zeta(\sigma+it)<0$ for 
   some  $t\in\R$ and  $\Real\zeta(\sigma_0+it)>0$ for all $t\in\R$. 
   
   In \cite{NaVW} and \cite{NaVW2} we encounter the question of the 
   supremum $\sigma(1)$ of  $\Real(s)$ for the solutions of $\zeta(s)=1$.  In Sections
   \ref{sectiona} and \ref{section1} we will
   solve this problem and also answer the same question for the solutions of 
   $\zeta(s)=a$ for any given  $a>0$. 
   
   In Section \ref{seccionT}  we  give a more direct proof 
   of Theorem  11.5(C) of Titchmarsh.  
   
   Our method is constructive so that it allowed us to find explicit 
   roots of $\zeta(s)=1$ with $\sigma$ near the extremal value $\sigma(1)$ (
   by means of the Lenstra–Lenstra–Lovász lattice basis reduction algorithm ), and 
   analogously solutions of $\zeta'(s)=0$ with $\Real(s)$ near $E$. We also found
   a relation between  the two problems: 
   Near every almost-extremal solution for $\zeta(s)=1$ there is one for 
   $\zeta'(\rho)=0$ with $\rho-s\approx E-\sigma(1)$ ($\,$see Section \ref{conn} for 
   a more precise formulation$\,$).
   
   In Section \ref{section7} we will discuss some similar aspects 
   of general Dirichlet functions $L(s,\chi)$.

   There are two types of curves $\Imag\zeta(\sigma+it)=0$. One kind (the 
   $I_1$ curves) is crossing the halfplane $\sigma>0$ more or less horizontally
   whereas the other kind (the $I_2$ curves) has the form of a loop. These 
   loops do not stick out arbitrarily far to the right. 
   In Section \ref{S9} we determine exactly the limit of the $I_2$ curves 
   $\Imag\zeta(\sigma+it)=0$. This problem was also mentioned in \cite{L}.
   
   The somewhat surprising fact is that this limit of the  $I_2$ curves 
   is equal to the limit $E$ of the zeros of $\zeta'(s)$ considered in Theorem 
   11.5 (C) of Titchmarsh.


   \section{The key lemmas.}
   We will use the following
   \begin{lemma} \label{KH}
   There exists a sequence of real numbers $(t_k)$  such 
   that 
   \begin{displaymath}
   \lim_{k\to\infty} \zeta(s+it_k)=\frac{2^s-1}{2^s+1}\zeta(s)
   \end{displaymath}
   uniformly on compact sets of the half plane $\sigma>1$.
   \end{lemma}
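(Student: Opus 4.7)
The key observation is algebraic: using the Euler product,
$$\frac{2^s-1}{2^s+1}\zeta(s)=\frac{1-2^{-s}}{1+2^{-s}}\prod_p\frac{1}{1-p^{-s}}=\frac{1}{1+2^{-s}}\prod_{p\ge 3}\frac{1}{1-p^{-s}},$$
so the limit is obtained from $\zeta(s)=\prod_p(1-p^{-s})^{-1}$ by replacing $2^{-s}$ with $-2^{-s}$ in the factor at $p=2$ and leaving the other factors unchanged. Accordingly, my plan is to produce real $t_k$ for which $2^{-it_k}\to -1$ while $p^{-it_k}\to 1$ for every odd prime $p$, and then to upgrade this pointwise behaviour of the Euler factors to uniform convergence on compact subsets of $\{\sigma>1\}$.

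\textbf{Step 1 (Kronecker).} By unique factorization, $\log 2,\log 3,\log 5,\ldots$ are linearly independent over $\Q$. By the continuous form of Kronecker's theorem, the one-parameter subgroup
$$t\mapsto \Bigl(\tfrac{t\log 2}{2\pi},\tfrac{t\log 3}{2\pi},\ldots,\tfrac{t\log p_N}{2\pi}\Bigr)\bmod 1$$
is therefore dense in $(\R/\Z)^N$. Hence for every $N$ and every $\varepsilon>0$ there exists a real $t$ with $t\log 2$ within $\varepsilon$ of $\pi$ modulo $2\pi$ and $t\log p$ within $\varepsilon$ of $0$ modulo $2\pi$ for every odd prime $p\le p_N$. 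Let $t_k$ correspond to choices $N=N_k\to\infty$ and $\varepsilon=\varepsilon_k\to 0$; then for every fixed prime $p\ge 3$ one has $p^{-it_k}\to 1$, while $2^{-it_k}\to -1$.

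\textbf{Step 2 (Uniformity on compacta).} Fix a compact $K\subset\{\sigma>1\}$ and $\sigma_0>1$ with $\Real s\ge\sigma_0$ on $K$. Since $\sum_p p^{-\sigma_0}<\infty$, both Euler products
$$\prod_p (1-p^{-s-it_k})^{-1},\qquad \prod_{p\ge 3}(1-p^{-s})^{-1}$$
converge absolutely and uniformly on $K$ with tail estimates independent of $t_k$. Given $\delta>0$, choose $M$ so that truncating at $p>M$ changes either product by at most $\delta$ on $K$. Once $k$ is large enough that $p_{N_k}\ge M$ and $\varepsilon_k$ is small, continuity of each Euler factor in its argument together with Step 1 yield
$$\frac{1}{1-2^{-s-it_k}}\prod_{3\le p\le M}\frac{1}{1-p^{-s-it_k}}\longrightarrow \frac{1}{1+2^{-s}}\prod_{3\le p\le M}\frac{1}{1-p^{-s}}$$
uniformly on $K$. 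Combining this with the uniform tail estimate gives the claim.

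\textbf{Main obstacle.} The algebraic identity and the tail bound are routine; the substantive point is the simultaneous Diophantine approximation provided by Kronecker's theorem, which relies on $\Q$-linear independence of $\{\log p\}_p$. One then has to coordinate $N_k\to\infty$ with $\varepsilon_k\to 0$ so that a \emph{single} sequence $(t_k)$ works uniformly on every compact subset of $\{\sigma>1\}$ simultaneously, which is where the quantitative coupling between the two limits is needed.
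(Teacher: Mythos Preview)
Your proof is correct and follows essentially the same strategy as the paper: invoke Kronecker's theorem (via the $\Q$-linear independence of the $\log p$) to produce a single sequence $(t_k)$ with $2^{-it_k}\to-1$ and $p^{-it_k}\to1$ for odd $p$, then upgrade to locally uniform convergence by a tail-plus-finite-part argument. The only cosmetic difference is that the paper carries out Step~2 via the Dirichlet series, showing $\zeta(s+it_k)\to\sum_{n\ge1}(-1)^{\nu(n)}n^{-s}$ and then factoring this sum as $\frac{2^s-1}{2^s+1}\zeta(s)$, whereas you work directly with the Euler product; both routes use the same uniform tail bound $\sum_{n>N}n^{-\sigma_0}$ (or its multiplicative analogue) and the same continuity argument on the finite part.
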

   
   \begin{proof}
   Since the numbers $\log p_n$ are linearly independent over $\Q$, there are ( by 
   Kronecker's theorem \cite[Theorem 7.9, p.~150]{A} )  for each positive 
   integer $N$ and
   any $\eta>0$
   a real number $t$ and integers $g_1$, \dots, $g_N$ such that 
   \begin{displaymath}
   |-t\log 2-\pi+2\pi g_1|<\eta, \quad 
   |-t\log p_j+2\pi g_j|<\eta, \quad 2\le j\le N
   \end{displaymath}
   where $p_n$ denotes the $n$-th prime number.
   
   Taking $\eta$ small enough we may obtain in this way a real $t$ such that
   \begin{displaymath}
   |2^{-it}+1|<\varepsilon,\quad |p_j^{-it}-1|<\varepsilon,\quad 2\le j\le N.
   \end{displaymath}
   Repeating this construction we obtain a sequence of real numbers $(t_k)$
   such that 
   \begin{displaymath}
   \lim_{k\to\infty} 2^{-it_k}=-1,\quad 
   \lim_{k\to\infty} p^{-it_k}=1\quad \makebox{for any  odd prime $p$}.
   \end{displaymath}
   
   Now we prove that any such sequence satisfies the Lemma.  For any natural
   number $n$ let $\nu(n)$ be the exponent of $2$ in the prime factorization 
   of $n$.
   Let $n=2^{\nu(n)}q_1^{a_1}\cdots q_r^{a_r}$ be the prime factorization of $n$.
   Then we will have $n^{-it_k}\to (-1)^{\nu(n)}$, 
   and as we will show
   \begin{displaymath}
   \lim_{k\to\infty}\zeta(s+it_k) =\sum_{n=1}^\infty \frac{(-1)^{\nu(n)}}{n^s}
   \quad\makebox{uniformly for $\sigma\ge a>1$}.
   \end{displaymath}
   Given $a>1$ and $\varepsilon>0$ we first  determine $N$ such that
   \begin{displaymath}
   \sum_{n=N+1}^\infty\frac{1}{n^{a}}<\varepsilon.
   \end{displaymath}
   For $1\le n\le N$ we then have $n^{-it_k}\to(-1)^{\nu(n)}$, so that 
   there exists a $K$ such that 
   \begin{displaymath}
   |n^{-it_k}-(-1)^{\nu(n)}|<\frac{\varepsilon}{N},\qquad 1\le n\le N, \quad
   k\ge K.
   \end{displaymath}
   For $\Real(s)=\sigma\ge a$ and $k>K$ we will then have
   \begin{multline*}
   \Bigl|\zeta(s+it_k)-\sum_{n=1}^\infty \frac{(-1)^{\nu(n)}}{n^s}\Bigr|\le 
   \Bigl|\sum_{n=1}^\infty \frac{n^{-it_k}-(-1)^{\nu(n)}}{n^s}\Bigr|\le \\
   \le
   \sum_{n=1}^N|n^{-it_k}-(-1)^{\nu(n)}|n^{-a}+
   2\sum_{n=N+1}^\infty n^{-a}\le 3\varepsilon.
   \end{multline*}
   Finally we check whether
   \begin{multline*}
   \sum_{n=1}^\infty \frac{(-1)^{\nu(n)}}{n^s}=
   \Bigl(\sum_{j=0}^\infty\frac{(-1)^j}{2^{js}}\Bigr)
   \Bigl(\sum_{k=0}^\infty\frac{1}{(2k+1)^s}\Bigr)=\\
   =\Bigl(1+\frac{1}{2^s}\Bigr)^{-1}\prod_{p\ge 3}\Bigl(1-\frac{1}{p^s}\Bigr)^{-1}
   =\frac{1-\frac{1}{2^s}}{1+\frac{1}{2^s}}\zeta(s)
   =\frac{2^s-1}{2^s+1}\zeta(s).
   \end{multline*}
   \end{proof}

   \begin{lemma} \label{KH2}
   There exists a sequence of real numbers $(t_k)$  such 
   that 
   \begin{displaymath}
   \lim_{k\to\infty} \zeta(s+it_k)=\frac{\zeta(2s)}{\zeta(s)}
   \end{displaymath}
   uniformly on compact sets of the half plane $\sigma>1$.
   \end{lemma}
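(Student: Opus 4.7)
The plan is to replay the proof of Lemma \ref{KH}, but now steering $p^{-it_k}\to -1$ at \emph{every} prime $p$ simultaneously, rather than only at $p=2$. The right-hand side identifies, by a quick Euler-product calculation, with the Dirichlet series of the Liouville function $\lambda$ (the completely multiplicative function with $\lambda(p)=-1$):
\[
\sum_{n=1}^\infty\frac{\lambda(n)}{n^s}=\prod_p\Bigl(1+\frac{1}{p^s}\Bigr)^{-1}=\prod_p\frac{1-p^{-s}}{1-p^{-2s}}=\frac{\zeta(2s)}{\zeta(s)}.
\]
It therefore suffices to build $(t_k)$ with $n^{-it_k}\to\lambda(n)$ pointwise for every $n\ge 1$.

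First I would invoke Kronecker's theorem exactly as in Lemma \ref{KH}. Since $\log p_1,\log p_2,\ldots$ are $\Q$-linearly independent, for every $N$ and every $\eta>0$ there exist $t\in\R$ and integers $g_1,\ldots,g_N$ with
\[
|-t\log p_j-\pi+2\pi g_j|<\eta,\qquad 1\le j\le N,
\]
and hence $|p_j^{-it}+1|<\varepsilon$ for $1\le j\le N$. Letting $N\to\infty$ and $\eta\to 0$ and extracting diagonally produces $(t_k)$ with $p^{-it_k}\to-1$ for every prime $p$. Multiplicativity then gives $n^{-it_k}\to(-1)^{a_1+\cdots+a_r}=\lambda(n)$ whenever $n=p_{j_1}^{a_1}\cdots p_{j_r}^{a_r}$.

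Next, the passage from pointwise convergence of the coefficients to uniform convergence of $\zeta(s+it_k)$ on compacta of $\sigma>1$ is literally the estimate already carried out in Lemma \ref{KH}. For any $a>1$ and $\varepsilon>0$ I would fix $N$ with $\sum_{n>N}n^{-a}<\varepsilon$, bound the tail by $2\varepsilon$ using $|n^{-it_k}-\lambda(n)|\le 2$, and bound the head by $\varepsilon$ once $k$ is large enough that $|n^{-it_k}-\lambda(n)|<\varepsilon/N$ for $1\le n\le N$. This delivers uniform convergence on $\{\sigma\ge a\}$ for every $a>1$, hence on compact subsets of $\{\sigma>1\}$.

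The only non-routine point is the simultaneous Kronecker approximation at \emph{all} primes; but this is strictly parallel to the construction in Lemma \ref{KH} (which already exploits $\Q$-linear independence of the full family $\log p_n$), so it presents no genuinely new difficulty. Once $(t_k)$ is in hand, the remainder of the proof is mechanical.
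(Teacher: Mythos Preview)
Your proposal is correct and follows essentially the same route as the paper: use Kronecker's theorem to produce $(t_k)$ with $p^{-it_k}\to-1$ for every prime, deduce $n^{-it_k}\to(-1)^{\Omega(n)}=\lambda(n)$, pass to uniform convergence on $\{\sigma\ge a\}$ by the head/tail estimate of Lemma~\ref{KH}, and identify the limit series with $\zeta(2s)/\zeta(s)$ via the Euler product for the Liouville function. The paper's own proof is just a terse pointer to the argument of Lemma~\ref{KH} together with the identification of $\sum_n(-1)^{\Omega(n)}n^{-s}$, so you have reproduced it (with a bit more detail) exactly.
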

   
   \begin{proof}
   The proof is similar to that of the previous Lemma. Applying
   Kronecker's theorem we get a sequence of real numbers $(t_k)$ such that
   \begin{displaymath}
   \lim_{k\to\infty} p^{-it_k}=-1 \qquad \text{for all primes $p$}.
   \end{displaymath}
   Similarly as in the proof of Lemma \ref{KH} we obtain 
   \begin{displaymath}
   \lim_{k\to\infty}\zeta(s+it_k) =\sum_{n=1}^\infty \frac{(-1)^{\Omega(n)}}{n^s}
   \quad\text{uniformly for $\sigma\ge\sigma_0>1$}
   \end{displaymath}   
   where $\Omega(n)$ is the total number of prime factors of $n$ counting 
   multiplicities. 
   It is well known that this series is equal to $\frac{\zeta(2s)}{\zeta(s)}$
   (see Titchmarsh \cite[formula (1.2.11)]{T2}).
   \end{proof}
   
   To apply these lemmas we will use a  theorem of Hurwitz
   (see \cite[Theorem 3.45, p.~119]{T1} or 
   \cite[Theorem 4.10d and Corollary 4.10e, p.~282--283]{H}). 
   We will use it in the following form:
   \begin{theorem}[(Hurwitz)]\label{H}
   Assume that a sequence $(f_n)$ of holomorphic functions on a region $\Omega$
   converges uniformly on compact sets of $\Omega$ to the function $f$ which has
   an isolated  zero $a\in\Omega$. Then for $n\ge n_0$ the functions $f_n$ have a 
   zero $a_n\in\Omega$ such that $\lim_n a_n = a$.
   \end{theorem}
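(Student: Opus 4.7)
The plan is to deduce this from Rouch\'e's theorem, applied on arbitrarily small circles around~$a$. The starting observation is that, since $a$ is an isolated zero of $f$, one can pick a radius $r_0 > 0$ so small that the closed disk $\overline{D(a,r_0)}$ lies in $\Omega$ and contains no zero of $f$ other than $a$. For any $0 < r \le r_0$ the boundary circle $C_r = \partial D(a,r)$ is a compact subset of $\Omega$ on which $|f|$ attains a strictly positive minimum $m_r = \min_{z \in C_r} |f(z)| > 0$.

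Next I would exploit the uniform convergence hypothesis applied to the compact set $\overline{D(a,r)}$: there exists $n_0 = n_0(r)$ such that for all $n \ge n_0$,
\begin{displaymath}
|f_n(z) - f(z)| < m_r \le |f(z)| \qquad \text{for every } z \in C_r.
\end{displaymath}
Rouch\'e's theorem then implies that $f_n$ and $f$ have the same number of zeros (counted with multiplicity) in $D(a,r)$. Since this number is at least one (namely $a$ itself for $f$), the function $f_n$ must have at least one zero in $D(a,r)$ for every $n \ge n_0(r)$.

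To manufacture the sequence $(a_n)$, I would pick a decreasing sequence $r_k \downarrow 0$, choose the corresponding thresholds $n_0(r_k)$ (which can be assumed strictly increasing in $k$), and for each $n$ in the range $n_0(r_k) \le n < n_0(r_{k+1})$ define $a_n$ to be any zero of $f_n$ in $D(a,r_k)$. Then $|a_n - a| < r_k \to 0$, so $a_n \to a$, as required. For the earliest indices $n < n_0(r_1)$ one may define $a_n$ arbitrarily (or simply start the conclusion at $n \ge n_0$, as the statement allows).

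The only non-routine aspect is the verification of the Rouch\'e hypothesis on $C_r$, which requires both the isolation of the zero (to ensure $m_r > 0$) and the passage from uniform convergence on compact sets to a uniform estimate on the particular compact $C_r$. Neither step is delicate, so I expect the proof to be short; the main thing to be careful about is quantifying the dependence of $n_0$ on $r$ so that the diagonal-type extraction producing $a_n \to a$ is sound.
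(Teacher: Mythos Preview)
Your argument is correct and is in fact the standard proof of Hurwitz's theorem via Rouch\'e. The paper itself does not supply a proof of this statement: it is quoted as a classical result with references to Titchmarsh's \emph{Theory of Functions} and Henrici's \emph{Applied and Computational Complex Analysis}. So there is nothing to compare against; your write-up is fine as it stands.
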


   \section{The bound for $\zeta(s)=a$ ($>0$)  with $a\ne1$.}\label{sectiona}
   
   For a positive real number $a$ let $\sigma(a)$ denote the supremum 
   of all real $\sigma$
   such that $\zeta(\sigma+it)=a$ for some $t\in\R$.
   
   \begin{theorem}
   Let $a$ be $>0$ but $\ne1$. 
   If $a>1$ then  $\sigma(a)$ is the unique solution of 
   $\zeta(\sigma)=a$ with $\sigma>1$.
   If $0<a<1$ then $\sigma(a)$ is the unique solution of $\frac{\zeta(2\sigma)}
   {\zeta(\sigma)}=a$ with $\sigma>1$. 
   \end{theorem}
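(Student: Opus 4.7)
The plan is to establish matching upper and lower bounds for $\sigma(a)$ in each case, treating $a>1$ by elementary monotonicity of $\zeta$ on the real axis, and handling the more delicate case $0<a<1$ via Lemma~\ref{KH2} and Hurwitz's theorem. The upper bound in both regimes comes from the standard two-sided Euler product estimate on $\sigma>1$: the triangle inequality applied to the Dirichlet series gives $|\zeta(\sigma+it)|\le\zeta(\sigma)$, while
\[
|\zeta(s)|^{-1}=\prod_p|1-p^{-s}|\le\prod_p(1+p^{-\sigma})=\frac{\zeta(\sigma)}{\zeta(2\sigma)}
\]
gives $|\zeta(\sigma+it)|\ge\zeta(2\sigma)/\zeta(\sigma)$. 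Hence any solution of $\zeta(\sigma+it)=a$ with $\sigma>1$ must satisfy $\zeta(2\sigma)/\zeta(\sigma)\le a\le\zeta(\sigma)$. Since $\zeta(\sigma)$ is strictly decreasing from $+\infty$ to $1$ on $(1,\infty)$, the right-hand bound forces $\sigma\le\sigma^{*}$ (the unique solution of $\zeta(\sigma)=a$) when $a>1$. Rewriting $g(\sigma):=\zeta(2\sigma)/\zeta(\sigma)=\prod_p(1+p^{-\sigma})^{-1}$ shows $g$ is a strictly increasing bijection from $(1,\infty)$ onto $(0,1)$, so the left-hand bound forces $\sigma\le\sigma^{**}$ (the unique solution of $g(\sigma)=a$) when $0<a<1$. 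This monotonicity also yields existence and uniqueness of $\sigma^{*}$ and $\sigma^{**}$.

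For the matching lower bound, the case $a>1$ is immediate: the real point $s=\sigma^{*}$ (with $t=0$) already satisfies $\zeta(s)=a$, hence $\sigma(a)\ge\sigma^{*}$. For the case $0<a<1$, I would invoke Lemma~\ref{KH2}. The limit function $g(s)=\zeta(2s)/\zeta(s)$ is holomorphic and non-constant on the half plane $\sigma>1$, so $g(s)-a$ has an isolated zero at $\sigma^{**}$. Lemma~\ref{KH2} furnishes a sequence $(t_k)$ such that $f_k(s):=\zeta(s+it_k)-a$ converges to $g(s)-a$ uniformly on compacts of $\sigma>1$. Applying Hurwitz's Theorem~\ref{H} on a small disk centered at $\sigma^{**}$ then produces, for all sufficiently large $k$, a zero $s_k$ of $f_k$ with $s_k\to\sigma^{**}$; that is, $\zeta(s_k+it_k)=a$ with $\Real(s_k+it_k)=\Real(s_k)\to\sigma^{**}$. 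Therefore $\sigma(a)\ge\sigma^{**}$, and combined with the upper bound this yields $\sigma(a)=\sigma^{**}$.

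The main obstacle is the lower bound in the case $0<a<1$: unlike the case $a>1$, there is no visible real or algebraically explicit point witnessing $\sigma$-values close to $\sigma^{**}$, and one genuinely needs the simultaneous Diophantine approximation packaged in Lemma~\ref{KH2} together with Hurwitz's theorem to manufacture the required complex solutions. A minor routine check that remains is to verify that $\sigma^{**}$ lies in the interior of the half plane $\sigma>1$, so that Hurwitz applies on a closed disk contained in the domain, and that $g(s)-a$ is not identically zero near $\sigma^{**}$ (which is clear from the strict monotonicity of $g$ on the real axis).
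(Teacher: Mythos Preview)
Your proposal is correct and follows essentially the same approach as the paper: the upper bound via the Euler product estimate $|\zeta(s)|\ge\zeta(2\sigma)/\zeta(\sigma)$ and the lower bound via Lemma~\ref{KH2} combined with Hurwitz's theorem are exactly the paper's argument, and your treatment of the case $a>1$ spells out what the paper dismisses as ``easily seen.''
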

   
   \begin{proof}
   It will be convenient to 
   define $\sigma_a$ as the ( unique ) solution of the equations considered in the
   theorem. 
    
   The case $a>1$. It is easily seen that in this case we have $\sigma(a)=\sigma_a$. 
   
   In  the case $0<a<1$ we consider  a solution to $\zeta(s)=a$. Then 
   \begin{displaymath}
   a=|\zeta(s)|=\prod_p\frac{1}{\left|1-\frac{1}{p^s}\right|}\ge
   \prod_p\frac{1}{1+\frac{1}{p^\sigma}}=\frac{\zeta(2\sigma)}{\zeta(\sigma)},
   \qquad (\sigma>1).
   \end{displaymath}
   It is clear from the last equality that $\frac{\zeta(2\sigma)}{\zeta(\sigma)}$ 
   is strictly increasing ( for $\sigma>1$ ) from $0$ to $1$. Hence, there 
   exists a unique solution $\sigma_a$ to the equation 
   $a=\frac{\zeta(2\sigma)}{\zeta(\sigma)}$. The inequality 
   $a\ge\frac{\zeta(2\sigma)}{\zeta(\sigma)}$ is then equivalent to 
   $\sigma\le \sigma_a$. Taking the supremum of $\sigma$ for all solutions 
   of $\zeta(s)=a$ we 
   obtain $\sigma(a)\le \sigma_a$. 
   
   To prove the converse we apply Lemma \ref{KH2}: There  exists a sequence
   of real numbers $(t_k)$ such that $\zeta(s+it_k)-a$ converges uniformly on compact 
   sets of $\sigma>1$ to the function $\frac{\zeta(2s)}{\zeta(s)}-a$. 
   The limit function has a zero at $s=\sigma_a$. So, by Hurwitz's theorem  
   $\sigma_a$ is a limit point of zeros $b_{k}$  ($k\ge k_0$) of  
   $\zeta(s+it_{k})-a$. 
   
   Therefore $\zeta(b_{k}+it_{k})-a=0$ and $\lim_k b_{k}=\sigma_a$. 
   For $s_k:=b_{k}+it_{k}$ we have $\zeta(s_k)=a$ and 
   \begin{displaymath}
   \lim_k\Real(s_k)=\lim_k\Real(b_{k})=\Real(\lim_k b_{k})= \sigma_a.
   \end{displaymath}
   It follows that 
   \begin{displaymath}
   \sigma(a)=\sup \{\sigma: \zeta(s)=a\}\ge \lim_k\Real(s_k)=\sigma_a.
   \end{displaymath}
   Therefore $\sigma(a)=\sigma_a$, proving our theorem.

   \end{proof}

   \section{The bound for $\zeta(s)=1$.}\label{section1}
   
   \begin{theorem}\label{Tzeta1}
   The supremum  $\sigma(1)$  of all real $\sigma$ such that 
   $\zeta(\sigma+it)=1$ for some
   value of $t\in\R$, is equal to the unique solution $\sigma>1$ of the equation
   \begin{equation}\label{eq1}
   \zeta(\sigma)=\frac{2^\sigma+1}{2^\sigma-1}.
   \end{equation}
   Numerically we have
   \begin{displaymath}
   \sigma(1) = 1.94010\,16837\,43625\,28601\,74693\,90525\,54887\,
   82302\,47607\dots
   \end{displaymath}
   \end{theorem}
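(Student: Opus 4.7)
The plan is to adapt the strategy of the previous theorem: derive the upper bound $\sigma(1) \le \sigma_1$ from an analytic identity forced by the equation $\zeta(s) = 1$, and match it with a lower bound $\sigma(1) \ge \sigma_1$ obtained by combining Lemma \ref{KH} with Hurwitz's theorem (Theorem \ref{H}). Here $\sigma_1$ denotes the purported solution of \eqref{eq1}. Note that the naive approach used for the case $0<a<1$ fails here: taking moduli in the Euler product only gives $|\zeta(s)| \ge \zeta(2\sigma)/\zeta(\sigma) < 1$, which is useless when we impose $|\zeta(s)| = 1$. A sharper identity is needed.

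For the upper bound I would begin with any solution $s = \sigma + it$ of $\zeta(s) = 1$ in the half-plane $\sigma > 1$ and multiply by $1 - 2^{-s}$ to eliminate the even-indexed terms of the Dirichlet series. From the identity
\[
(1 - 2^{-s})\zeta(s) = \sum_{n \text{ odd}} n^{-s} = 1 + \sum_{\substack{n \ge 3 \\ n \text{ odd}}} n^{-s},
\]
the hypothesis $\zeta(s) = 1$ becomes $-2^{-s} = \sum_{n \ge 3,\, n \text{ odd}} n^{-s}$, and applying the triangle inequality termwise yields
\[
2^{-\sigma} \le \sum_{\substack{n \ge 3 \\ n \text{ odd}}} n^{-\sigma} = (1 - 2^{-\sigma})\zeta(\sigma) - 1,
\]
which rearranges to $\zeta(\sigma) \ge \frac{2^\sigma + 1}{2^\sigma - 1}$. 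The function $f(\sigma) = \zeta(\sigma) - \frac{2^\sigma + 1}{2^\sigma - 1}$ tends to $+\infty$ as $\sigma \to 1^+$ and is asymptotic to $-2^{-\sigma}$ as $\sigma \to \infty$ (since $\zeta(\sigma) - 1 \sim 2^{-\sigma}$ while $\frac{2}{2^\sigma - 1} \sim 2^{1-\sigma}$), so \eqref{eq1} has a solution $\sigma_1 > 1$ and the derived inequality then forces $\sigma \le \sigma_1$. Uniqueness of $\sigma_1$, which I expect to be the most delicate technical point, should follow from a monotonicity argument applied to the auxiliary function $(2^\sigma - 1)\zeta(\sigma) - (2^\sigma + 1)$.

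For the matching lower bound, Lemma \ref{KH} furnishes a sequence $(t_k)$ along which $\zeta(s + it_k) - 1$ converges uniformly on compact subsets of $\{\sigma > 1\}$ to $\frac{2^s - 1}{2^s + 1}\zeta(s) - 1$. This limit is holomorphic on the half-plane (the zeros of $2^s + 1$ all lie on the imaginary axis), non-constant, and vanishes at $s = \sigma_1$, so $\sigma_1$ is an isolated zero. Hurwitz's theorem then produces zeros $b_k$ of $\zeta(s + it_k) - 1$ with $b_k \to \sigma_1$; setting $s_k = b_k + it_k$ gives genuine solutions of $\zeta(s_k) = 1$ with $\Real(s_k) = b_k \to \sigma_1$, hence $\sigma(1) \ge \sigma_1$. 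Combining the two bounds yields $\sigma(1) = \sigma_1$, and the stated numerical value follows from a straightforward high-precision root-finding computation applied to equation \eqref{eq1}.
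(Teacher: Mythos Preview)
Your proposal is correct and follows essentially the same route as the paper: the identity $(1-2^{-s})\zeta(s)=\sum_{n\text{ odd}}n^{-s}$ for the upper bound, and Lemma~\ref{KH} combined with Hurwitz for the lower bound. One remark: the uniqueness of $\sigma_1$ is not delicate at all --- your auxiliary function $(2^\sigma-1)\zeta(\sigma)-(2^\sigma+1)$ is identically $\sum_{k\ge2}(2/(2k-1))^\sigma-1$, a sum of terms each strictly decreasing in $\sigma$, which is exactly how the paper disposes of the issue in one line (it in fact multiplies your identity through by $2^s$ to obtain $-1=\sum_{k\ge2}(2/(2k-1))^s$ before taking moduli, making the monotonicity manifest). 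Also, in your Hurwitz step the zeros $b_k$ are a priori complex, so one should write $\Real(s_k)=\Real(b_k)\to\sigma_1$ rather than $\Real(s_k)=b_k$.
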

   
   \begin{proof}
   Assume that $\zeta(s)=1$ with $\Real(s)=\sigma>1$. Then by the Euler product 
   formula
   \begin{displaymath}
   1-\frac{1}{2^s}=\prod_{p\ge3}\Bigl(1-\frac{1}{p^s}\Bigr)^{-1}=
   \sum_{k=1}^\infty\frac{1}{(2k-1)^s}
   \end{displaymath}
   or
   \begin{displaymath}
   -1=\sum_{k=2}^\infty \Bigl(\frac{2}{2k-1}\Bigr)^s.
   \end{displaymath}
   Therefore
   \begin{displaymath}
   1=\Bigl|\sum_{k=2}^\infty \Bigl(\frac{2}{2k-1}\Bigr)^s\Bigr|
   \le \sum_{k=2}^\infty \Bigl(\frac{2}{2k-1}\Bigr)^\sigma.    
   \end{displaymath}
   Since the right hand side is decreasing in $\sigma$, it follows that 
   there is a unique solution $\sigma_1$  of the equation
   \begin{equation}\label{eq2}
   1=\sum_{k=2}^\infty \Bigl(\frac{2}{2k-1}\Bigr)^\sigma
   =(2^\sigma-1)\zeta(\sigma)-2^\sigma
   \end{equation}
   and that $\sigma\le \sigma_1$. Now observe that \eqref{eq2} is equivalent
   to \eqref{eq1}.
   Therefore, $\zeta(s)=1$ implies 
   $\sigma\le \sigma_1$ which is by definition the solution of  equation  
   \eqref{eq1}.
   Taking the sup over all solutions of $\zeta(s)=1$ we get $\sigma(1)\le \sigma_1$.
   
   For the converse inequality we apply Lemma \ref{KH} to get a sequence of 
   real numbers $(t_k)$ such that 
   \begin{displaymath}
   \lim_k\{\zeta(s+it_k)-1\} = \frac{2^s-1}{2^s+1}\zeta(s)-1
   \end{displaymath}
   uniformly on compact sets of $\sigma>1$. By definition $\sigma_1$ is a zero 
   of the limit function $\frac{2^s-1}{2^s+1}\zeta(s)-1$, so that there exists
   a natural number $n_0$ and a sequence of complex numbers $(z_k)$ such that
   $\zeta(z_k+it_k)-1=0$ and $\lim_k z_k = \sigma_1$. For $s_k:= z_k+it_k$
   we then have $\zeta(s_k)=1$ 
   and $\lim_k\sigma_k=\sigma_1$ ( with $\sigma_k:=\Real(s_k)$ ).
   
   It follows that $\sigma(1)=\sup_{\zeta(s)=1}\Real s\ge\sigma_1$, proving
   the theorem.
   \end{proof}

   \section{The bound for $\zeta'(s)=0$. A new proof of Titchmarsh's Theorem 11.5(C).}
   \label{seccionT}
   
   Theorem 11.5(C) in Titchmarsh \cite{T2} says that there exists a
   constant $E$ between $2$ and $3$, such that $\zeta'(s)\ne0$ for $\sigma>E$, 
   while $\zeta'(s)$ has an infinity of zeros in every strip between $\sigma=1$ 
   and $\sigma = E$.  In this section we give a more direct proof of this
   theorem and determine the precise value of $E$.

   \begin{theorem}\label{T5.1}
   Let $E$ be the unique solution of the equation 
   \begin{equation}\label{1}
   \frac{2^{\sigma+1}}{4^\sigma-1}\log 2=-\frac{\zeta'(\sigma)}{\zeta(\sigma)},
   \qquad ( \sigma>1 ).
   \end{equation}
   Then $\zeta'(s)\ne0$ for $\sigma>E$, while $\zeta'(s)$ has a sequence of 
   zeros $(s_k)$ with  $\lim_k\Real(s_k)=E$.
   
   The  value of this constant is 
   \begin{displaymath}
   E = 2.81301\,40202\,52898\,36752\,72554\,01216\,68696\,38461\,40560\dots   
   \end{displaymath}
   \end{theorem}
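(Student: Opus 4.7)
The plan is to follow the template of Sections~\ref{sectiona} and \ref{section1}: derive the upper bound $\sup\{\Real(s):\zeta'(s)=0\}\le E$ from a modulus estimate on the logarithmic derivative of the Euler product, and then establish the matching lower bound by combining Lemma~\ref{KH} with Hurwitz's theorem (Theorem~\ref{H}).

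For the upper bound, suppose $\zeta'(s)=0$ with $\sigma=\Real(s)>1$. Since $\zeta(s)\ne 0$, logarithmic differentiation of the Euler product gives
\[
0=-\frac{\zeta'(s)}{\zeta(s)}=\sum_{p}\frac{\log p}{p^s-1}.
\]
Isolating the prime $2$, passing to moduli, and applying the elementary bounds $|2^s-1|\le 2^\sigma+1$ together with $|p^s-1|\ge p^\sigma-1$ for $p\ge 3$, I would obtain
\[
\frac{\log 2}{2^\sigma+1}\le\sum_{p\ge 3}\frac{\log p}{p^\sigma-1}=-\frac{\zeta'(\sigma)}{\zeta(\sigma)}-\frac{\log 2}{2^\sigma-1}.
\]
Combining the two $\log 2$ contributions on the left yields precisely
\[
\frac{2^{\sigma+1}\log 2}{4^\sigma-1}\le -\frac{\zeta'(\sigma)}{\zeta(\sigma)},
\]
which forces $\sigma\le E$ provided \eqref{1} has a unique solution.

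For the lower bound, Lemma~\ref{KH} supplies a real sequence $(t_k)$ with $\zeta(s+it_k)\to f(s):=\frac{2^s-1}{2^s+1}\zeta(s)$ uniformly on compact subsets of the half-plane $\sigma>1$. Uniform convergence of holomorphic functions transfers to their derivatives on compact subsets, so $\zeta'(s+it_k)\to f'(s)$ uniformly on compact subsets of $\sigma>1$ as well. A direct logarithmic differentiation then gives
\[
\frac{f'(s)}{f(s)}=\frac{2^{s+1}\log 2}{4^s-1}+\frac{\zeta'(s)}{\zeta(s)},
\]
so the real zeros of $f'$ on $(1,\infty)$ coincide with the real solutions of \eqref{1}; in particular $f'(E)=0$. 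Since $f$ is analytic and nonconstant on $\sigma>1$, $E$ is an isolated zero of $f'$, and Hurwitz's theorem furnishes complex numbers $z_k\to E$ with $\zeta'(z_k+it_k)=0$. Setting $s_k:=z_k+it_k$ then produces zeros of $\zeta'$ with $\Real(s_k)\to E$, yielding the matching bound.

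The main obstacle is verifying that \eqref{1} has a unique solution on $(1,\infty)$, which is needed both to define $E$ and to legitimize the upper-bound step and the isolated-zero claim for $f'$. Setting $g(\sigma):=-\zeta'(\sigma)/\zeta(\sigma)-2^{\sigma+1}\log 2/(4^\sigma-1)$ and using the partial-fraction identity $\frac{2^{\sigma+1}\log 2}{4^\sigma-1}=\frac{\log 2}{2^\sigma-1}+\frac{\log 2}{2^\sigma+1}$ together with $-\zeta'(\sigma)/\zeta(\sigma)=\log 2/(2^\sigma-1)+\sum_{n\ge 3,\,n\ne 2^k}\Lambda(n)/n^\sigma$, the $\log 2/(2^\sigma-1)$ pieces cancel, leaving
\[
g(\sigma)=-\frac{\log 2}{2^\sigma+1}+\sum_{\substack{n\ge 3\\ n\ne 2^k}}\frac{\Lambda(n)}{n^\sigma}.
\]
From this closed form one reads off $g(\sigma)\to+\infty$ as $\sigma\to 1^+$ (driven by the pole of $\zeta$ at $s=1$) and $g(\sigma)\to 0^-$ as $\sigma\to\infty$ (the $-\log 2\cdot 2^{-\sigma}$ term dominates $\log 3\cdot 3^{-\sigma}$). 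Existence of a zero is then immediate; uniqueness requires a somewhat delicate monotonicity analysis showing that $g$ first strictly decreases from $+\infty$ through a single zero before relaxing back up to $0$. Once this is settled the numerical value of $E$ is obtained by standard root-finding applied to \eqref{1}.
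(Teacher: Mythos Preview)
Your upper-bound and lower-bound arguments match the paper's exactly: isolate the prime $2$ in $-\zeta'/\zeta$, take moduli, and for the converse differentiate the limit in Lemma~\ref{KH} and invoke Hurwitz at the real zero $E$ of $f'$.

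The only weak spot is the uniqueness of $E$, which you flag as ``the main obstacle'' and leave as a ``somewhat delicate monotonicity analysis'' of your auxiliary function $g$. The paper avoids this entirely with a one-line rewriting: multiplying the inequality $\frac{\log 2}{2^\sigma+1}\le \sum_{p\ge 3}\frac{\log p}{p^\sigma-1}$ through by $2^\sigma+1$ gives
\[
\log 2 \;\le\; \sum_{p\ge 3}(2^\sigma+1)\Bigl(\frac{1}{p^\sigma}+\frac{1}{p^{2\sigma}}+\cdots\Bigr)\log p
\;=\;\sum_{p\ge 3}\sum_{k\ge 1}\Bigl((2/p^k)^\sigma+(1/p^k)^\sigma\Bigr)\log p,
\]
and since $p^k\ge 3>2$ every summand on the right is strictly decreasing in $\sigma$. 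So the right-hand side is a strictly decreasing function of $\sigma$ running from $+\infty$ to $0$, and equality with the constant $\log 2$ determines $E$ uniquely. This replaces your asymptotic analysis of $g$ (which, as you note, is not monotone) with a trivial monotonicity observation; it also makes the step ``the inequality forces $\sigma\le E$'' immediate rather than conditional.
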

   
   \begin{proof}
   Assuming that $\zeta'(s) = 0$  ( for  $\sigma>1$ ) we have
   \begin{displaymath}
   \frac{\zeta'(s)}{\zeta(s)}=\frac{d}{ds}\log\zeta(s)=\frac{d}{ds}
   \sum_p-\log\Bigl(1-\frac{1}{p^s}\Bigr)=-\sum_p\frac{\log p}{p^s-1}
   \end{displaymath}
   so that we may write the equation $\zeta'(s)=0$ as
   \begin{displaymath}
   \sum_p\frac{\log p}{p^s-1}=0
   \end{displaymath}
   or 
   \begin{displaymath}
   -\frac{\log2}{2^s-1}=\sum_{p\ge3}\frac{\log p}{p^s-1}.
   \end{displaymath}
   So, we must necessarily have
   \begin{displaymath}
   \frac{\log2}{2^\sigma+1}\le\Bigl|-\frac{\log2}{2^s-1}\Bigr|=
   \Bigl|\sum_{p\ge3}\frac{\log p}{p^s-1}\Bigr|\le
   \sum_{p\ge3}\frac{\log p}{p^\sigma-1}
   \end{displaymath}
   and we  may write this inequality as 
   \begin{displaymath}
   \log2\le\sum_{p\ge3}(2^\sigma+1)\Bigl(\frac{1}{p^\sigma}+
   \frac{1}{p^{2\sigma}}+\frac{1}{p^{3\sigma}}+\cdots\Bigr)\log p.
   \end{displaymath}
   Since the right hand side is strictly decreasing in $\sigma$ this is equivalent to 
   $\sigma\le E:=$ the unique solution of the equation
   \begin{displaymath}
   \frac{\log2}{2^{\sigma}+1}+\frac{\log2}{2^{\sigma}-1}= 
   \sum_{p\ge2}\frac{\log p}{p^{\sigma}-1}
   \end{displaymath}
   which is equivalent to \eqref{1}.
   
   This proves that there is no zero of $\zeta'(s)$ with $\sigma>E$. 
   
   Now we must find a sequence of complex numbers $(s_k)$ with 
   \hbox{$\zeta'(s_k)=0$} and $\lim_k\Real(s_k)=E$.
   
   By Lemma \ref{KH} $\zeta'(s+it_k)$ converges uniformly on compact sets of 
   $\sigma>1$ to the function 
   \begin{displaymath}
   \frac{d}{ds}\frac{2^s-1}{2^s+1}\zeta(s)=\Bigl(\frac{2^{s+1}}{4^s-1}\log 2
   +\frac{\zeta'(s)}{\zeta(s)}\Bigr)\cdot \frac{2^s-1}{2^s+1}\zeta(s).
   \end{displaymath}
   This function has a  zero at $s=E$ (see equation \eqref{1}), 
   so that by  Hurwitz's theorem, there exist for $k\ge k_0$ 
   numbers $z_{k}$ such that $z_{k}\to E$ and
   $\zeta'(z_{k}+it_{k})=0$.  Taking $s_k =z_{k}+it_{k}$ we will have
   $\zeta'(s_k)=0$ and 
   \begin{displaymath}
   \lim_k\Real(s_k)=\lim_k \Real(z_{k}+it_{k})=
   \lim_k\Real(z_{k})=E
   \end{displaymath}
   as we wanted to show.

   With Mathematica we found that the solution to equation \eqref{1} is  
   approximately the number 
   given in the theorem.
   \end{proof}

   \section{The connection between $\zeta(s)=1$ and $\zeta'(s)=0$.}\label{conn}
    
   We have seen that to get points with $\zeta(s)=1$ and $\sigma$ near $\sigma(1)$,
   and points $\rho$ with $\zeta'(\rho)=0$ and $\Real\rho$ near $E$, we have applied
   in both cases Lemma \ref{KH}.  The limit function 
   $f(s):=\frac{2^s-1}{2^s+1}\zeta(s)$
   satisfies  $f(\sigma(1))=1$ and $f'(E)=0$. Hence,  from the 
   approximate function $\zeta(s+it_k)$ we may obtain simultaneously  
   points $s$ and $\rho$ 
   with $\zeta(s)=1$ and $\zeta'(\rho)=0$ and more or less to the same height $t_k$.
  
   We will say that a
   sequence of complex numbers $(s_n)$ is \emph{almost extremal} for 
   $\zeta(s)=1$ if $\zeta(s_n)=1$ and $\lim_n\Real(s_n)=\sigma(1)$. Analogously
   $(\rho_n)$ is said to be \emph{almost extremal} for $\zeta'(s)=0$ if
   $\zeta'(\rho_n)=0$ and $\lim_n\Real(\rho_n)=E$. 
   
   First we prove that an almost extremal sequence is related to
   the situation of Lemma \ref{KH}.
   \begin{theorem}\label{sTot}
   (a) If $(s_n)$ is an almost extremal sequence for $\zeta(s)=1$, then $t_n:=
   \Imag(s_n)$ satisfies
   \begin{equation}\label{pcont}
   \lim_{n\to\infty} 2^{-it_n}=-1,\quad 
   \lim_{n\to\infty} p^{-it_n}=1\quad \text{for every  odd prime $p$}.
   \end{equation}
   (b) If $(\rho_n)$ is an almost extremal sequence for $\zeta'(s)=0$, then $t_n:=
   \Imag(\rho_n)$ also satisfies \eqref{pcont}.
   \end{theorem}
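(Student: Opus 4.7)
The overall strategy for both parts is to revisit the extremal triangle inequalities used in the proofs of Theorems \ref{Tzeta1} and \ref{T5.1} and observe that, when $\Real(s_n)$ approaches the respective extremum, each such inequality must become asymptotically sharp. This sharpness, in turn, forces specific phase alignment of the numbers $p^{-it_n}$ along the sequence.

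For part (a), I will start from the identity $\sum_{k=2}^\infty\bigl(\tfrac{2}{2k-1}\bigr)^{s_n}=-1$ that follows from $\zeta(s_n)=1$ in the proof of Theorem \ref{Tzeta1}. Writing $c_{k,n}:=\bigl(\tfrac{2}{2k-1}\bigr)^{\sigma_n}$ and $S_n:=\sum_{k\ge 2}c_{k,n}$, the extremal argument of Section \ref{section1} already gives $S_n\ge 1$, while $\sigma_n\to\sigma(1)$ combined with strict monotonicity of $S_n$ in $\sigma$ yields $S_n\to 1$. Taking real parts of the identity and adding $S_n$ to both sides produces
\[\sum_{k=2}^\infty c_{k,n}\Bigl(1+\cos\bigl(t_n\log\tfrac{2k-1}{2}\bigr)\Bigr)=S_n-1\longrightarrow 0,\]
a series of nonnegative terms; each summand therefore tends to $0$, and since $c_{k,n}\to(2/(2k-1))^{\sigma(1)}>0$, this forces $((2k-1)/2)^{it_n}\to -1$ for every $k\ge 2$. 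Specializing to $2k-1=3$ and $2k-1=9=3^2$ and forming the ratio $(9/2)^{it_n}/((3/2)^{it_n})^2=2^{it_n}$ gives $2^{it_n}\to -1/1=-1$, whence $2^{-it_n}\to -1$. For any odd prime $p$, the identity $p^{it_n}=(p/2)^{it_n}\cdot 2^{it_n}$ then yields $p^{it_n}\to(-1)(-1)=1$, so $p^{-it_n}\to 1$.

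For part (b), the parallel plan starts from $\zeta'(\rho_n)=0$, rewritten as $-\log 2/(2^{\rho_n}-1)=\sum_{p\ge 3}\log p/(p^{\rho_n}-1)$, and exploits the chain
\[\frac{\log 2}{2^{\sigma_n}+1}\le\Bigl|\frac{\log 2}{2^{\rho_n}-1}\Bigr|=\Bigl|\sum_{p\ge 3}\frac{\log p}{p^{\rho_n}-1}\Bigr|\le\sum_{p\ge 3}\frac{\log p}{|p^{\rho_n}-1|}\le\sum_{p\ge 3}\frac{\log p}{p^{\sigma_n}-1},\]
whose leftmost and rightmost members both tend to the common value $\log 2/(2^E+1)=\sum_{p\ge 3}\log p/(p^E-1)$ as $\sigma_n\to E$, by equation \eqref{1}. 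Every intermediate quantity must therefore converge to this same limit. From the identity $|p^s-1|^2=p^{2\sigma}-2p^\sigma\cos(t\log p)+1$, the collapse $|2^{\rho_n}-1|\to 2^E+1$ forces $\cos(t_n\log 2)\to -1$, giving $2^{-it_n}\to -1$. For each odd prime $p$, the nonnegative termwise differences $\log p/(p^{\sigma_n}-1)-\log p/|p^{\rho_n}-1|$ sum to $0$ in the limit and so each individually vanishes, yielding $|p^{\rho_n}-1|\to p^E-1$ and hence, via the same identity, $p^{-it_n}\to 1$.

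I expect the main difficulty to lie in part (a): the constraints $((2k-1)/2)^{it_n}\to -1$ only directly relate the odd primes to $2$, so recovering the limit of $2^{-it_n}$ itself requires combining two such constraints through the multiplicative relation $9=3^2$. In part (b), the chain of inequalities handles the prime $2$ and the odd primes separately by design, so the analogous extraction is structurally more direct.
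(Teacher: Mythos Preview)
Your proof is correct and follows essentially the same strategy as the paper's: in both parts you revisit the extremal inequalities from Theorems \ref{Tzeta1} and \ref{T5.1}, observe that they become asymptotically sharp as $\sigma_n\to\sigma(1)$ (resp.\ $\sigma_n\to E$), deduce the required phase alignment term by term, and then recover the individual limits $2^{-it_n}\to-1$ and $p^{-it_n}\to1$ via multiplicative relations among odd integers. Your execution is marginally more direct than the paper's---you use nonnegativity of the summands and a squeeze to force each term to zero, whereas the paper passes to subsequences and argues by contradiction---but the underlying ideas and the multiplicative trick (your $9=3^2$ versus the paper's $p$ and $p^2$) are the same.
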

   
   \begin{proof}
   (a) Let $s_n=\sigma_n+it_n$. Since $\lim_n\sigma_n=\sigma(1)>1$ we may assume that 
   $\sigma_n>1$ for all $n$. 

   As in the proof of Theorem \ref{Tzeta1}  the equation $\zeta(s_n)=1$ 
   may be written as 
   \begin{displaymath}
   -1=\sum_{k=2}^\infty \Bigl(\frac{2}{2k-1}\Bigr)^{\sigma_n+it_n}.
   \end{displaymath}
   Since $\lim_n\sigma_n=\sigma(1)$, we see that  $\sigma_n$ converges
   to the unique solution to the equation
   \begin{displaymath}
   1=\sum_{k=2}^\infty \Bigl(\frac{2}{2k-1}\Bigr)^{\sigma}.
   \end{displaymath}
   Therefore
   \begin{displaymath}
   \sum_{k=2}^\infty \Bigl(\frac{2}{2k-1}\Bigr)^{\sigma_n+it_n}=-
   \sum_{k=2}^\infty \Bigl(\frac{2}{2k-1}\Bigr)^{\sigma(1)}
   \end{displaymath}
   so that, for all $n\in\N$ we have
   \begin{equation}\label{uno1}
   \sum_{k=2}^\infty\Bigl(\frac{2}{2k-1}\Bigr)^{\sigma(1)}\Bigl(1+
   \Bigl(\frac{2}{2k-1}\Bigr)^{\sigma_n-\sigma(1)+it_n}\Bigr)=0.
   \end{equation}
   We now prove that for each $k\ge2$ we must have
   \begin{equation}\label{dos2}
   \lim_n \Bigl(\frac{2}{2k-1}\Bigr)^{it_n}=-1.
   \end{equation}

   We proceed by contradiction and assume that \eqref{dos2} is not true  
   for some $k_0$.  Since 
   the absolute value of $\bigl(\frac{2}{2k-1}\bigr)^{it_n}$ is $1$, 
   there must exist a subsequence $n_j$ such that 
   \begin{displaymath}
   \lim_{j} \Bigl(\frac{2}{2k_0-1}\Bigr)^{it_{n_j}}= a_{k_0}\ne -1, \qquad 
   |a_{k_0}|=1.
   \end{displaymath}
   By a diagonal argument we may assume that for this subsequence we 
   also have the limits
   \begin{displaymath}
   \lim_{j} \Bigl(\frac{2}{2k-1}\Bigr)^{it_{n_j}}= a_{k}, 
   \qquad |a_k|=1,\quad k\ne k_0.
   \end{displaymath}
   Now consider the equation \eqref{uno1} for $n=n_j$ and take the limit
   for $j\to\infty$. Interchanging limit and sum we then obtain 
   \begin{displaymath}
   \sum_{k=2}^\infty\Bigl(\frac{2}{2k-1}\Bigr)^{\sigma(1)}(1+
   a_k)=0.
   \end{displaymath}
   Now take real parts in this equation.
   Since $\Real(1+a_k)\ge0$ but $\Real(1+a_{k_0})>0$ we get a contradiction, 
   proving \eqref{dos2}.

   Hence, for any  $k$  we have \eqref{dos2}.  
   Now if $p$ is an odd prime we have $p=2k+1$ and $p^2=2m+1$ so that
   \begin{displaymath}
   \lim_n \Bigl(\frac{2}{p}\Bigr)^{it_n}=-1, \quad
   \lim_n \Bigl(\frac{2}{p^2}\Bigr)^{it_n}=-1.
   \end{displaymath}
   Hence 
   \begin{displaymath}
   \lim_n p^{it_n}=\Bigl(\frac{2}{p}\Bigr)^{it_n}\cdot 
   \Bigl(\frac{2}{p^2}\Bigr)^{-it_n}=1
   \end{displaymath}
   so that
   \begin{displaymath}
   \lim_n 2^{it_n}=\lim_n \Bigl(\frac{2}{p}\Bigr)^{it_n} p^{it_n}=-1.
   \end{displaymath}
   \bigskip
   
   (b) Assume now that $(\rho_n)$ is an almost extremal sequence for $\zeta'(s)=0$. 
   Let $\rho_n=\sigma_n+it_n$. Since $\lim_n\sigma_n=E>1$ we may assume that
   $\sigma_n>1$ for all $n$. 
   
   As in the proof of Theorem \ref{T5.1} we will have
   \begin{displaymath}
   \frac{\log2}{2^{\sigma_n}+1}\le\Bigl|-\frac{\log2}{2^{\rho_n}-1}\Bigr|=
   \Bigl|\sum_{p\ge3}\frac{\log p}{p^{\rho_n}-1}\Bigr|\le
   \sum_{p\ge3}\frac{\log p}{p^{\sigma_n}-1}.
   \end{displaymath}
   Since $\lim_n\sigma_n=E$  and $E$ satisfies  equation \eqref{1} we have
   \begin{displaymath}
   \lim_{n\to\infty}\frac{\log2}{2^{\sigma_n}+1}=\lim_{n\to\infty}
   \sum_{p\ge3}\frac{\log p}{p^{\sigma_n}-1}
   \end{displaymath}
   so that
   \begin{equation}\label{inter}
   \lim_{n\to\infty}\Bigl|-\frac{\log2}{2^{\rho_n}-1}\Bigr|=\frac{\log2}{2^{E}+1}
   =\sum_{p\ge3}\frac{\log p}{p^{E}-1}=\lim_{n\to\infty}
   \Bigl|\sum_{p\ge3}\frac{\log p}{p^{\rho_n}-1}\Bigr|.
   \end{equation}
   The first equality in \eqref{inter} implies that 
   $\lim_n|1-2^{\sigma_n+it_n}|=1+2^E$. Let $a$ be a limit point of
   the sequence $(2^{it_n})$. We may choose a sequence $(n_k)$ 
   such that $\lim_k2^{it_{n_k}}=a$. Then 
   $\lim_k|1-2^{\sigma_{n_k}+it_{n_k}}|=|1-2^E a|=1+2^E$. Since $|a|=1$ 
   this is possible only if $a=-1$. Therefore, $(2^{it_n})$, beeing a 
   bounded sequence
   with a unique limit
   point, is convergent and $\lim_n 2^{it_n}=-1$. 
   
   For each odd prime $p$ the sequence $(p^{it_n})$ has $1$ as unique 
   limit point. Indeed, if not, then there is an odd prime $q$ and 
   a sequence $(n_k)$ with 
   \begin{displaymath}
   \lim_k q^{it_{n_k}}= a_q\ne1.
   \end{displaymath} 
   By a diagonal argument we may 
   assume that the limits $\lim_k p^{it_{n_k}}= a_p$ exist for each prime $p$. 
   We will always have $|a_p|=1$. Taking limits in the last equality of 
   \eqref{inter} (for the subsequence $(n_k)$) we obtain
   \begin{displaymath}
   \sum_{p\ge3}\frac{\log p}{p^{E}-1}=
   \Bigl|\sum_{p\ge3}\frac{\log p}{p^E a_p-1}\Bigr|.
   \end{displaymath}
   We have $|p^Ea_p-1|\ge p^E-1$, but the above equality is only possible 
   if we have for all $p$ the equality   $|p^Ea_p-1|= p^E-1$, which
   is in contradiction with our assumption $a_q\ne1$.  
   \end{proof}
   
   Now we can prove the connection between the two problems:
   \begin{theorem}
   Let $(s_n)$ be an almost extremal sequence for $\zeta(s)=1$. Then there
   exists an almost extremal sequence $(\rho_n)$ for $\zeta'(s)=0$ such that
   \begin{displaymath}
   \lim_n\, (\rho_n-s_n) = E-\sigma(1).
   \end{displaymath}
   Analogously if $(\rho_n)$  is an almost extremal sequence for $\zeta'(s)=0$,
   there exists an  almost extremal sequence $(s_n)$ for $\zeta(s)=1$
   satisfying the same condition. 
   \end{theorem}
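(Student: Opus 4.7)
The plan is to observe that Theorem \ref{sTot} is exactly what is needed to run the argument of Lemma \ref{KH} in reverse. Given an almost extremal sequence $(s_n)$ for $\zeta(s)=1$, I would set $t_n:=\Imag(s_n)$ and $\sigma_n:=\Real(s_n)$. By Theorem \ref{sTot}(a) the heights $t_n$ satisfy the Kronecker-type limits \eqref{pcont}, and these are precisely the hypotheses used inside the proof of Lemma \ref{KH}. So the same computation applies verbatim and yields
\begin{displaymath}
\lim_{n\to\infty}\zeta(s+it_n)=f(s),\qquad f(s):=\frac{2^s-1}{2^s+1}\zeta(s),
\end{displaymath}
uniformly on compact subsets of $\sigma>1$.

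Next, since holomorphic uniform convergence on compacta transfers to derivatives, we also get $\zeta'(s+it_n)\to f'(s)$ uniformly on compacta. By the definition of $E$ in \eqref{1} we have $f'(E)=0$, and this zero is isolated because $f'$ is holomorphic and not identically zero. Hurwitz's theorem (Theorem \ref{H}) then produces, for all $n$ large enough, points $z_n\to E$ with $\zeta'(z_n+it_n)=0$. Setting $\rho_n:=z_n+it_n$ gives $\zeta'(\rho_n)=0$ and $\Real(\rho_n)=z_n\to E$, so $(\rho_n)$ is almost extremal for $\zeta'(s)=0$; for the finitely many initial indices outside the range supplied by Hurwitz, any zero of $\zeta'$ may be inserted, since almost extremality depends only on the tail. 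Because $s_n$ and $\rho_n$ share the same imaginary part,
\begin{displaymath}
\rho_n-s_n=z_n-\sigma_n\longrightarrow E-\sigma(1),
\end{displaymath}
a real limit, as claimed.

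The converse is entirely symmetric. Starting from an almost extremal $(\rho_n)$ for $\zeta'(s)=0$, Theorem \ref{sTot}(b) again gives \eqref{pcont} for $t_n:=\Imag(\rho_n)$, so once more $\zeta(s+it_n)\to f(s)$ uniformly on compacta. The defining equation of $\sigma(1)$ reads $f(\sigma(1))=1$, and $\sigma(1)$ is an isolated zero of $f-1$ since $f\not\equiv 1$. Applying Hurwitz to $\zeta(s+it_n)-1\to f(s)-1$ yields $w_n\to\sigma(1)$ with $\zeta(w_n+it_n)=1$, and $s_n:=w_n+it_n$ is then almost extremal for $\zeta(s)=1$ with $\rho_n-s_n\to E-\sigma(1)$. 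The only real work is Theorem \ref{sTot}, which has already been done; once \eqref{pcont} is in hand, the rest of the argument is just Lemma \ref{KH} combined with Hurwitz, and I do not anticipate any serious obstacle beyond checking the (automatic) isolatedness of the zeros of $f-1$ and $f'$.
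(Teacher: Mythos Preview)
Your proposal is correct and follows essentially the same route as the paper's proof: invoke Theorem \ref{sTot} to obtain \eqref{pcont}, feed this back into the argument of Lemma \ref{KH} to get uniform convergence of $\zeta(s+it_n)$ to $f(s)$, pass to derivatives, and apply Hurwitz at $E$ (respectively at $\sigma(1)$ for the converse). One tiny slip: you write $\Real(\rho_n)=z_n$, but $z_n$ is a priori complex, so this should read $\Real(\rho_n)=\Real(z_n)\to E$; the conclusion is unaffected.
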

   
   \begin{proof}
   Let $s_n =\sigma_n+it_n$. By Theorem \ref{sTot} we then have \eqref{pcont}.
   In the proof of Lemma \ref{KH} we have seen that \eqref{pcont} implies
   \begin{displaymath}
   \lim_n\zeta(s+it_n)=\frac{2^s-1}{2^s+1}\zeta(s)\quad \text{uniformly 
   on compact sets of $\sigma>1$}. 
   \end{displaymath}
   It follows that $\zeta'(s+it_n)$ also converges uniformly on compact 
   sets of $\sigma>1$ to the 
   derivative of $f(s):=\frac{2^s-1}{2^s+1}\zeta(s)$. In  
   the proof of Theorem \ref{T5.1} we have seen that $f'(E)=0$.  Hence, by Hurwitz's 
   theorem  for $n\ge n_0$ the function $\zeta'(s+it_n)$ has a zero $s=b_n$
   such that $\lim b_n = E$. Writing $\rho_n:=b_n+it_n$ we have $\zeta'(\rho_n)=0$
   and 
   \begin{displaymath}
   \lim_n \Real(\rho_n)=\lim_n \Real(b_n+it_n)=\lim_n \Real(b_n)=\Real(\lim_n b_n)
   =E.
   \end{displaymath}
   Hence $(\rho_n)$ is almost extremal for $\zeta'(s)=0$ and 
   \begin{displaymath}
   \lim_n(\rho_n-s_n)=\lim_n(b_n-\sigma_n)= E-\sigma(1).
   \end{displaymath}
   The proof for the other case is similar. 
   \end{proof}
   
   \section{Some bounds for Dirichlet $L$-functions.}\label{section7}
   
   Our previous analysis may also be applied to 
   general Dirichlet $L$-functions.  We will give
   two typical examples.  
   
   For the modulus $4$ the non-trivial Dirichlet character is given by 
   $\chi(2n+1)=(-1)^n$,
   $\chi(2n)=0$, so that 
   \begin{displaymath}
   L(s,\chi)=\prod_{p}\Bigl(1-\frac{\chi(p)}{p^s}\Bigr)^{-1}=
   \Bigl(1+\frac{1}{3^s}\Bigr)^{-1}\Bigl(1-\frac{1}{5^s}\Bigr)^{-1}
   \Bigl(1+\frac{1}{7^s}\Bigr)^{-1}\cdots
   \end{displaymath}
   So, the equation $L(s,\chi)=1$ is equivalent to 
   \begin{displaymath}
   \Bigl(1+\frac{1}{3^s}\Bigr)=\Bigl(1-\frac{1}{5^s}\Bigr)^{-1}
   \Bigl(1+\frac{1}{7^s}\Bigr)^{-1}\Bigl(1+\frac{1}{11^s}\Bigr)^{-1}
   \Bigl(1-\frac{1}{13^s}\Bigr)^{-1}\cdots
   \end{displaymath}
   Now ( similarly as in earlier sections ) we let the factor 
   $\left(1+\frac{1}{3^s}\right)$ ``point strictly westward'' and all  other factors
   ``strictly eastward'' (Kronecker's theorem applies here just as well). 
   As in Section \ref{section1}  this leads to  the equation 
   \begin{displaymath}
   \Bigl(1+\frac{1}{3^\sigma}\Bigr)=\Bigl(1-\frac{1}{5^\sigma}\Bigr)^{-1}
   \Bigl(1-\frac{1}{7^\sigma}\Bigr)^{-1}\Bigl(1-\frac{1}{11^\sigma}\Bigr)^{-1}
   \Bigl(1-\frac{1}{13^\sigma}\Bigr)^{-1}\cdots
   \end{displaymath}
   or 
   \begin{displaymath}
   \frac{1+\frac{1}{3^\sigma}}{\left(1-\frac{1}{2^\sigma}\right)
   \left(1-\frac{1}{3^\sigma}\right)}=\zeta(\sigma).
   \end{displaymath}
   (This kind of \emph{trick} also works in the general case. )
   
   Using Mathematica we found that in this case the supremum 
   of all  $\sigma$ such that $L(\sigma+it,\chi)=1$ for some real $t$
   equals 
   \begin{displaymath}
   1.88779\,09267\,08118\,92719\,63215\,42035\,11666\,82234\,70126\dots
   \end{displaymath}
   
   For $n=7$ we find ( for \emph{every charachter} $\chi$ mod 7 ) 
   that $L(s,\chi)=1$ leads
   to the equation 
   \begin{displaymath}
   \frac{1+\frac{1}{2^\sigma}}{\left(1-\frac{1}{2^\sigma}\right)
   \left(1-\frac{1}{7^\sigma}\right)}=\zeta(\sigma)
   \end{displaymath}
   and the bound 
   \begin{displaymath}
   1.83843\,45030\,97314\,94016\,69429\,96760\,82067\,80491\,61315\dots
   \end{displaymath}
   
   For $L(s,\chi)=a$ with $0<a<1$ we let all factors $\left(1-\frac{\chi(p)}{p^s}
   \right)^{-1}$ point ``strictly westward''. This leads to the equation
   \begin{displaymath}
   \prod_p\Bigl(1+\frac{|\chi(p)|}{p^s}\Bigr)^{-1} = a
   \end{displaymath}
   and the \emph{missing factors} are easily supplied.
   For the modulus $4$ and $a=\frac12$ this leads to the equation
   \begin{displaymath}
   \Bigl(1+\frac{1}{2^\sigma}\Bigr)\frac{\zeta(2\sigma)}{\zeta(\sigma)}=
   \frac12
   \end{displaymath}
   and the bound
   \begin{displaymath}
   1.33538\,71957\,45311\,13312\,01066\,99878\,57500\,83328\,78290\dots
   \end{displaymath}
   
   We leave the straightforward  general formulation to the reader.

   \section{Application of the Lenstra–Lenstra–Lovász lattice basis 
   reduction algorithm.}\label{sectionsiete}
   
   For various problems the existence of almost
   extremal sequences $(\sigma_k+it_k)$ depends heavily   
   on the existence of the limits $\lim_k p^{it_k}=: a_p$.  Given a sequence 
   of real numbers $(\theta_j)$, Kronecker's theorem guarantees the existence 
   of a sequence of real numbers $(t_k)$ such that 
   \begin{displaymath}
   \lim_k p_j^{it_k}= e^{i\theta_j},\qquad  (j\in\N).
   \end{displaymath}
   We want to find $t\in\R$  such  that $\sigma+it$ is almost extremal
   for an adequate $\sigma$. To this end, given $n$ we must find $t\in\R$ 
   such that for certain $m_j\in\Z$ 
   \begin{displaymath}
   |t\log p_j-\theta_j-2m_p\pi|<\varepsilon, \qquad 1\le j\le n
   \end{displaymath}
   for some small $\varepsilon$. 
   
   We will use the LLL algorithm similarly as  
   Odlyzko and te Riele \cite{OH} in their disproof of the Mertens conjecture.
   
   Given a basis for a lattice $L$ contained in $\Z^N$, the LLL algorithm  
   yields
   a reduced basis for $L$,  usually consisting of short vectors.
   
   So, we fix $n$, some weights $(w_j)_{j=1}^n$ 
   (in practice we used $w_j=1.15^{40-j}$) and two natural 
   numbers $\nu$ and $r$,
   and
   construct a lattice $L$ in $\Z^{n+2}$  by means of  $n+2$ vectors 
   $v_1$, $v_2$, \dots, $v_n$, $v$ and $v'$
   in $\Z^{n+2}$ ( the method uses lattices in $\Z^N$ ):
   \begin{displaymath}
   \SMALL
   \begin{matrix}
   v_1  = &  (&\lfloor 2\pi w_1\cdot 2^\nu\rfloor, & 0, & 0,&\dots & 0, & 0, & 0)\\
   \noalign{\smallskip}
   v_2  = &  (&0,&\lfloor 2\pi w_2\cdot 2^\nu\rfloor,  & 0, &\dots & 0, & 0, & 0)\\
   \\
   \\
   v_n  = &  (&0, & 0, & 0, &\dots & \lfloor 2\pi w_n\cdot 2^\nu\rfloor, & 0, & 0)\\
   \noalign{\smallskip}
   v    = & (&\lfloor w_1 2^{\nu-r}\lambda_1\rfloor, &
   \lfloor w_2 2^{\nu-r}\lambda_2\rfloor, &
   \lfloor w_3 2^{\nu-r}\lambda_3\rfloor, & \dots & 
   \lfloor w_n 2^{\nu-r}\lambda_n\rfloor, & 0, & 1)\\ \noalign{\smallskip}
   v'  = & ( & -\lfloor w_1\theta_12^\nu\rfloor, &
   -\lfloor w_2\theta_22^\nu\rfloor, &
   -\lfloor w_3\theta_32^\nu\rfloor, & \dots &
   -\lfloor w_n\theta_n2^\nu\rfloor, & 2^\nu n^4, & 0)
   \end{matrix}
   \end{displaymath}
   where we have put $\lambda_j =\log p_j$. 
   
   Applying  the LLL algorithm to these vectors we get a reduced basis 
   $v^*_1$, $v^*_2$, \dots $v^*_{n+2}$ such that at least one of these vectors 
   will have a non-null $(n+1)$-coordinate.  But given that $2^\nu n^4$
   is very large compared with all other entries of the original basis,
   in a reduced basis ( with short vectors ) we do not expect more than one
   large vector.
   Assuming that it is $v^*_1$, its $(n+1)$ coordinate will be 
   $\pm 2^\nu n^4$, and without loss of generality we may assume that it is 
   $2^\nu n^4$. Let $x$ be the last coordinate of $v^*_1$. Then this vector 
   will have coordinate $j$ equal to ( since it is a linear combination of 
   the initial vectors )
   \begin{displaymath}
   x\lfloor w_j2^{\nu-r}\log p_j\rfloor+m_j\lfloor2\pi w_j 2^\nu\rfloor
   -\lfloor w_j \theta_j2^\nu\rfloor
   \end{displaymath}
   for some integers $m_j$. 
   Since it is a reduced basis, we expect this coordinate to be small. Hence also
   the number 
   \begin{displaymath}
   x w_j2^{\nu-r}\log p_j+m_j2\pi w_j 2^\nu
   - w_j \theta_j2^\nu=2^\nu w_j\Bigl(\frac{x}{2^r}\log p_j-\theta_j+2\pi m_j\Bigr)
   \end{displaymath}
   will be small
   and $t = \frac{x}{2^r}$ will have the property we are looking for:
   $t\log p_j-\theta_j+2\pi m_j$ will be small for $1\le j\le n$.

   Figure 1 illustrate    the results obtained.  
   This figure ( and others similar  to it ) is at the 
   origin of our results in  Section \ref{conn}. We were searching for near
   extremal values for the problem $\zeta(s)=1$, and the figure clearly shows
   that we also obtain a near extremal value for the problem $\zeta'(s)=0$.

   The figure represents the rectangle $(-2, 4)\times(h-3,h+3)$ where 
   $h=156326000$.  The solid curves  are those points where $\zeta(s)$ takes
   real values. On the dotted curves $\zeta(s)$ is purely imaginary.  
   For reference we have drawn the lines $\sigma=0$ and $\sigma=1$ 
   limiting the critical strip.
   
   The value $h=156326000$ was given by the LLL algorithm as a candidate
   for a near extreme value of $\zeta(s)=1$.  This is the point labelled $a$.
   In fact $\Real a = 1.907825\dots$ is near the limit $\sigma(1)=1.94010\dots$
   We see also the connected extreme value for $\zeta'(s)=0$. This is the
   point $\rho$ whose real part is also near the corresponding limit value
   $E$.   The role of the point $b$ will be explained in the next Section.

   \section{Bound for the real loops.}\label{S9}
   
   Since $\zeta(s)$ is real for all real $s$, there is no interest in the 
   question of the
   supremum of all $\sigma$ such that $\zeta(\sigma+it)\in\R$ for some $t\in\R$.
   We now focus on the supremum of the real loops.

   Since $u(s):=\Imag\zeta(s)$ is a harmonic function the points where $u(s)=0$
   are arranged in a set of analytic curves.  These curves are of two main 
   types. Some of them traverse the entire plane from $\sigma=-\infty$ to 
   $\sigma=+\infty$ ( in \cite{L} they are called  $I_1$ curves ).  In figure 
   \ref{plot} we have plotted one of these curves. All the other solid curves in this
   figure are $I_2$ curves, they form a loop starting at $\sigma=-\infty$
   and ending again at $\sigma=-\infty$.   Each such $I_2$ curve has a 
   \emph{turning point}, a point on the curve with $\sigma$ maximal.
   
      \begin{figure}[H]
   \begin{center}\label{plot}
   \includegraphics[width = 0.9\textwidth]{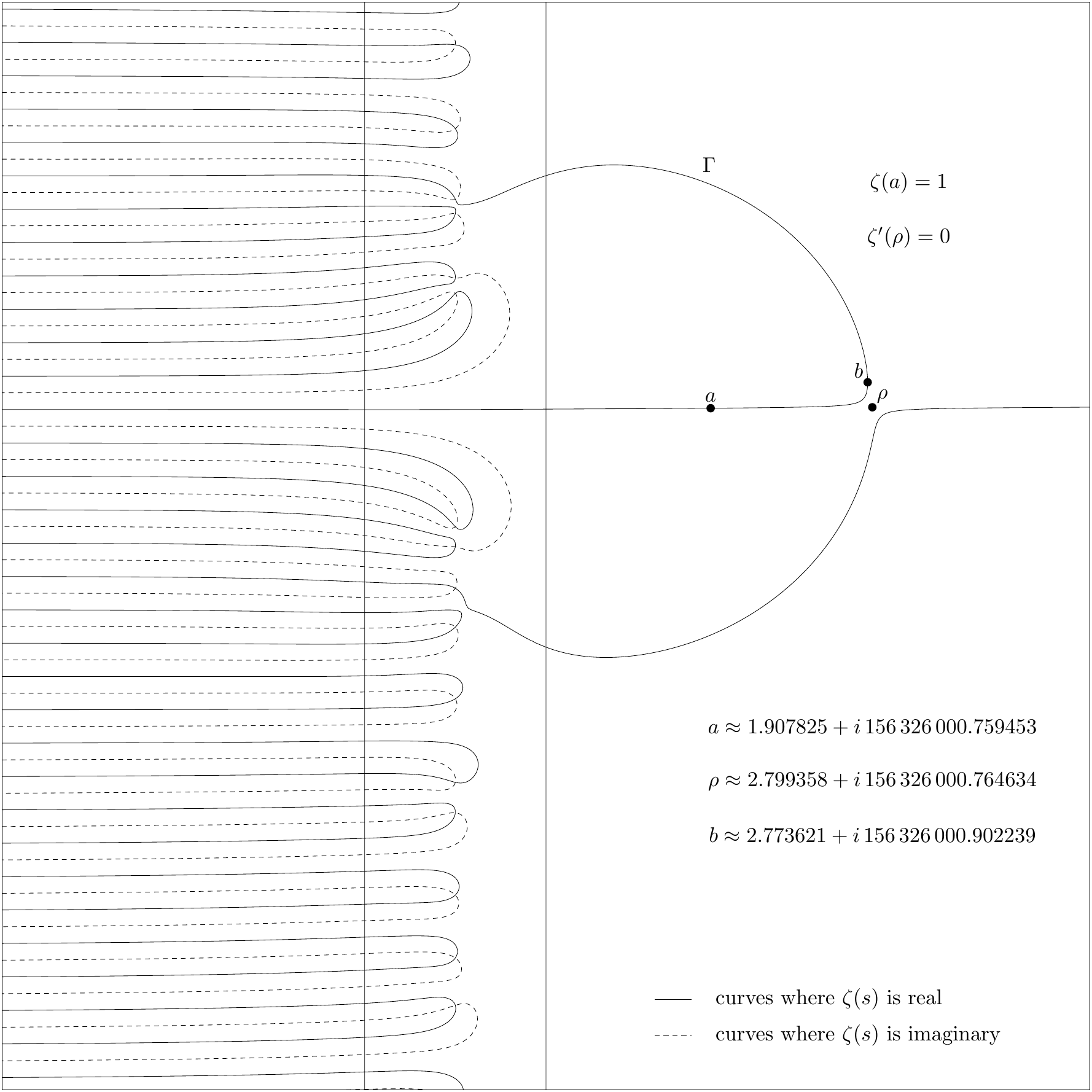}
   \caption{Curves $\Real\zeta(s)=0$ and $\Imag\zeta(s)=0$ near $t=156326000$. }
   \end{center}   
   \end{figure}

   In the case of the curve $\Gamma$ in figure \ref{plot} this is the point
   labelled $b$.  It is easy to see that at these points, since the curve
   $u(\sigma+it)=0$ has a vertical tangent, we must have 
   $u_\sigma(\sigma+it)=0$.  By the Cauchy-Riemann equations this is equivalent 
   to  $\Real\zeta'(\sigma+it)=0$. 
   
   Hence we  define a \emph{turning point} as a point $b=\sigma+it$ 
   such that
   \begin{displaymath}
   \Imag\zeta(b)=0\quad\text{and}\quad \Real\zeta'(b)=0.
   \end{displaymath}
   The first equation says that $b$ is on a real curve (i.~e.~a curve where
   the function $\zeta(s)$ is real), whereas the second equation  means 
   that at the point $b$ the tangent to such a curve is vertical.
   
   The question of the supremum $T$ of all $\sigma$ of turning points of the 
   $I_2$ loops of $\zeta(s)$ was mentioned in \cite{L}.  Here we solve this problem.
   \begin{theorem}\label{T9.1}
   Let $E=2.813014\dots$ be the constant of Theorem \ref{T5.1}. Then each turning 
   point $b=\sigma+it$
   for  $\zeta(s)$ satisfies $\sigma\le E$, and there is 
   a sequence of turning points $(b_k)$ for $\zeta(s)$ with $\lim_k\Real(b_k)=E$.
   \end{theorem}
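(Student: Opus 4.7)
My plan is to treat the upper bound $\sigma\le E$ and the lower bound (existence of turning points with $\Real(b_k)\to E$) separately.

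For the upper bound I start from the observation that at a turning point $b=\sigma+it$ with $\sigma>1$, the conditions force $\zeta(b)\in\R\setminus\{0\}$ and $\zeta'(b)\in i\R$, so $\zeta'(b)/\zeta(b)\in i\R$. Expressing this via the Euler product for the logarithmic derivative and isolating the $p=2$ term yields
\begin{displaymath}
-\log 2\cdot \Real\frac{1}{2^b-1}=\sum_{p\ge 3}\log p\cdot \Real\frac{1}{p^b-1}.
\end{displaymath}
Bounding the right-hand side in absolute value by $\sum_{p\ge 3}\log p/(p^\sigma-1)$ (as in the proof of Theorem \ref{T5.1}, using $|\Real(1/(p^b-1))|\le 1/(p^\sigma-1)$) and the left-hand side from below by $\log 2/(2^\sigma+1)$ will produce $\log 2/(2^\sigma+1)\le \sum_{p\ge 3}\log p/(p^\sigma-1)$, which by \eqref{1} is exactly $\sigma\le E$.

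For the lower bound I apply Lemma \ref{KH} to obtain a sequence $(t_k)$ with $\zeta(s+it_k)\to f(s):=\frac{2^s-1}{2^s+1}\zeta(s)$ uniformly on compacta of $\sigma>1$, hence also $\zeta'(s+it_k)\to f'(s)$. The function $f$ has real Dirichlet coefficients $(-1)^{\nu(n)}$ (shown in the proof of Lemma \ref{KH}) and satisfies $f'(E)=0$ by Theorem \ref{T5.1}, so $s=E$ is a turning point of $f$: it lies on the real axis where $\Imag f\equiv 0$, with vertical tangent since $f'(E)=0$. The plan is then to produce, for each sufficiently large $k$, an $s_k$ near $E$ solving $\Imag\zeta(s_k+it_k)=0$ and $\Real\zeta'(s_k+it_k)=0$, and set $b_k:=s_k+it_k$; then $\Real(b_k)=\Real(s_k)\to E$.

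The main obstacle is twofold. For the upper bound, the lower estimate $|\Real(1/(2^b-1))|\ge 1/(2^\sigma+1)$ does not hold for arbitrary $(\sigma,t)$; it requires the other condition $\Imag\zeta(b)=0$ to force $2^{it}$ close to $\pm 1$ at a turning point, and this is the delicate step. For the lower bound, the system $(\Imag\zeta(\cdot+it_k),\Real\zeta'(\cdot+it_k))=(0,0)$ is not holomorphic, so Hurwitz's theorem does not apply directly, and the limiting system has degenerate Jacobian at $s=E$ because $f'(E)=0$; the local Taylor expansion $f(s)-f(E)=\tfrac12 f''(E)(s-E)^2+O((s-E)^3)$ shows that near $E$ the curve $\Imag f=0$ has two crossing branches (the real axis and the line $\sigma=E$), and the perturbed system admits an approximate solution with $\Real s_k = E - \Real\zeta'(E+it_k)/f''(E) + o(1) \to E$, which is promoted to an exact turning point via a quantitative inverse function theorem argument using the nonzero $f''(E)$.
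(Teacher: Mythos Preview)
Both halves of your proposal contain genuine gaps.

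\textbf{Upper bound.} You correctly identify that the estimate $\bigl|\Real\frac{1}{2^b-1}\bigr|\ge \frac{1}{2^\sigma+1}$ fails in general (it vanishes when $\cos(t\log 2)=2^{-\sigma}$), but your proposed fix---using $\Imag\zeta(b)=0$ to force $2^{it}$ close to $\pm1$---does not work: for a generic turning point the condition $\Imag\zeta(b)=0$ imposes no such constraint on $2^{it}$, and the upper bound must hold for \emph{all} turning points, not just almost-extremal ones. The paper instead uses both turning-point equations simultaneously. Writing $f(\sigma,t)$ and $g(\sigma,t)$ for the $p=2$ contributions to $\Imag\log\zeta$ and $\Real(\zeta'/\zeta)$, it forms the quadratic combination $U(\sigma,t)=2^{2\sigma}f^2+(2^\sigma/\log2)^2g^2$ and bounds $U(\sigma,t)<H(\sigma)$ via a Cauchy--Schwarz argument that mixes the two equations. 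The decisive step is then the pointwise inequality $U(\sigma,t)\ge U(\sigma,\pi/\log 2)$ valid for \emph{all} $t$, whose proof is a nontrivial calculus exercise occupying most of the section; it is this inequality---not any closeness of $2^{it}$ to $\pm1$---that reduces the bound to the single equation defining $E$.

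\textbf{Lower bound.} Your diagnosis of the degenerate Jacobian is right, but an inverse-function-theorem argument based only on $f''(E)\ne0$ cannot succeed. Take the model $f(s)=1+s^2$ (so $a_0>0$, $a_2>0$, $a_3=0$): its unique nearby turning point is $s=0$, yet the perturbation $f_\varepsilon(s)=1+s^2+\varepsilon s$ with $(\Real\varepsilon)(\Imag\varepsilon)\ne0$ has \emph{no} turning point near $0$ (on the line $u=-\tfrac12\Real\varepsilon$ where $\Real f_\varepsilon'=0$ one finds $\Imag f_\varepsilon\equiv-\tfrac12(\Real\varepsilon)(\Imag\varepsilon)\ne0$). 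The paper therefore uses the additional information $f'''(E)<0$ and argues by topological degree: for the map $h(r,\varphi)=\Imag f(re^{i\varphi})+i\Real f'(re^{i\varphi})$ one shows that the winding number of $\gamma_r:=h(r,\cdot)$ about $0$ equals $1$ for small $r>0$ (the sign of $a_3=f'''(E)/6$ is exactly what separates the zeros of $\Imag f$ and $\Real f'$ on the circle and makes the index nonzero). Uniform convergence then transfers this index to the approximants $\zeta(s+E+it_k)$, and a homotopy argument produces a turning point in each small disc about $E$.
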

   
   We will use the following theorem
   \begin{theorem}\label{TLune}
   Let $A$ be the unique solution of the equation
   $$\sum_p\arcsin(p^{-\sigma})=\frac{\pi}{2},\qquad (\sigma>1).$$
   Then $A$ is the supremum of the $\sigma\in\R$ such that there is a 
   $t\in\R$ with $\Real\zeta(\sigma+it)<0$.  For $\sigma=A$ we have 
   $\Real\zeta(\sigma+it)>0$ for all $t\in\R$.
   
   The value of the constant $A$ is 
   \begin{displaymath}
   A = 1.19234\,73371\,86193\,20289\,75044\,27425\,59788\,34011\,
   19230\dots
   \end{displaymath}
   \end{theorem}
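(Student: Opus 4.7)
The plan is to combine an elementary geometric bound on $\arg(1-z)$ with Kronecker's theorem. First I would prove the key lemma that $|\arg(1-z)|\le\arcsin(|z|)$ for $|z|<1$, obtained by observing that $\{1-w:|w|=|z|\}$ is the circle of radius $|z|$ around $1$ and that the two tangent lines from the origin to it make angles $\pm\arcsin(|z|)$ with the real axis. Since $\zeta(s)\ne 0$ for $\sigma>1$, the principal logarithm of the Euler product gives $\arg\zeta(\sigma+it)=-\sum_p\arg(1-p^{-s})$, and termwise application of the lemma produces the master bound
\begin{equation*}
|\arg\zeta(\sigma+it)|\le \sum_p \arcsin(p^{-\sigma}).
\end{equation*}
Since the right-hand side is strictly decreasing in $\sigma$ and equals $\pi/2$ at $\sigma=A$, one immediately gets $\Real\zeta(\sigma+it)>0$ for $\sigma>A$ and $\Real\zeta(A+it)\ge 0$.

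To upgrade to strict positivity at $\sigma=A$, I would argue by contradiction. Suppose $\Real\zeta(A+it_0)=0$, so WLOG $\arg\zeta(A+it_0)=\pi/2$. Equality throughout the master bound forces, for every prime $p$, $\arg(1-p^{-A-it_0})=-\arcsin(p^{-A})$, which by the tangent geometry is equivalent to $t_0\log p \equiv -\arccos(p^{-A})\pmod{2\pi}$. Expanding $\arccos(p^{-A})=\tfrac{\pi}{2}-p^{-A}+O(p^{-3A})$ and subtracting the identities for consecutive primes $p_n<p_{n+1}$ in a subsequence with gap $g_n\le 2\log p_n$ (which exists by a standard averaging argument from the prime number theorem), both $t_0\log(p_{n+1}/p_n)$ and $p_{n+1}^{-A}-p_n^{-A}$ are $o(1)$, so the integer shift $m_{n+1}-m_n$ must vanish for large $n$. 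Dividing what remains by $g_n/p_n$ then yields
\begin{equation*}
t_0 \;=\; -A\,p_n^{-A}+o(1)\;\longrightarrow\;0,
\end{equation*}
forcing $t_0=0$; but $\zeta(A)>0$ is real, so $\arg\zeta(A)=0\ne\pi/2$, the desired contradiction.

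For the converse direction, that $\sigma<A$ admits $t$ with $\Real\zeta(\sigma+it)<0$, I would use Kronecker's theorem constructively. Since $\sum_p\arcsin(p^{-\sigma})>\pi/2$ strictly, pick $N$ so large that $\sum_{p\le p_N}\arcsin(p^{-\sigma})-\sum_{p>p_N}\arcsin(p^{-\sigma})>\pi/2$. The $\Q$-linear independence of $\log p_1,\dots,\log p_N$ lets Kronecker produce $t\in\R$ with each $t\log p_j$ arbitrarily close to $-\arccos(p_j^{-\sigma})\pmod{2\pi}$, making each $-\arg(1-p_j^{-\sigma-it})$ ($j\le N$) nearly maximal. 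The tail over $p>p_N$ is bounded in absolute value by $\sum_{p>p_N}\arcsin(p^{-\sigma})$, so combining yields $\arg\zeta(\sigma+it)>\pi/2$ and hence $\Real\zeta(\sigma+it)<0$. Values $\sigma\le 1$ do not affect the supremum, since $\zeta(\sigma)<0$ already on the real interval $(0,1)$, well below $A\approx 1.192$.

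\textbf{Main obstacle.} The nonstrict bound $\Real\zeta(A+it)\ge 0$ is automatic from the master inequality, but the strict positivity at the boundary value $\sigma=A$ is delicate: it requires ruling out that the infinite transcendental system $\cos(t\log p)=p^{-A}$ admits a real solution, for which Kronecker's theorem alone does not suffice and I must supplement with quantitative analytic input on prime gaps.
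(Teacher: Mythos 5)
The paper does not actually prove Theorem \ref{TLune}: it is quoted from van de Lune's report \cite{L} (with the numerical value of $A$ credited to Brent and van de Lune), so there is no in-paper proof to compare against line by line. Your route is the expected classical one: $|\arg(1-z)|\le\arcsin|z|$ applied termwise to the Euler product gives $|\arg\zeta(\sigma+it)|\le\sum_p\arcsin(p^{-\sigma})$, monotonicity handles $\sigma>A$, and Kronecker's theorem gives the converse. Your treatment of the delicate boundary case $\sigma=A$ is correct as far as I can check: equality in the master bound forces $\cos(t_0\log p)=p^{-A}$ with $\sin(t_0\log p)<0$ for \emph{every} prime, i.e. $t_0\log p\equiv-\arccos(p^{-A})\pmod{2\pi}$; subtracting these congruences along infinitely many consecutive primes with $p_{n+1}-p_n\le 2\log p_n$ (such a subsequence does follow from the prime number theorem, and even Chebyshev-type bounds with a larger constant would do) makes the integer offsets eventually equal, and after dividing by $\log(p_{n+1}/p_n)\asymp g_n/p_n$ the surviving terms give $t_0=-Ap_n^{-A}+o(1)$, hence $t_0=0$, contradicting $\cos(t_0\log 2)=2^{-A}$. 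The orders of magnitude work out (the $O(p_n^{-3A})$ error is negligible after the division since $A>1$), so this step is sound.

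The one genuine gap is in your converse direction as stated for \emph{every} $\sigma<A$. Pushing all of the first $N$ arguments to their maxima only shows that your chosen branch of $\arg\zeta(\sigma+it)$ exceeds $\pi/2$; to conclude $\Real\zeta(\sigma+it)<0$ you need that branch value to lie in $(\pi/2,3\pi/2)$ modulo $2\pi$. For $\sigma$ close to $1$ the total $\sum_p\arcsin(p^{-\sigma})$ exceeds $3\pi/2$ (it diverges as $\sigma\to1^+$), so the "maximize everything" construction can overshoot into a range where the cosine is again positive, and the stated inference "$\arg>\pi/2$ hence $\Real<0$" fails. This is easily repaired: either steer the first $N$ arguments to targets whose sum is just above $\pi/2$ (Kronecker lets you hit any prescribed target angles, and the tail is $<\varepsilon$ for $N$ large since the series converges for $\sigma>1$), or simply observe that for the supremum claim you only need $\sigma$ in a left neighbourhood of $A$, where $\sum_p\arcsin(p^{-\sigma})<\pi$ and your argument works verbatim. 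Apart from this, and the unverified decimal expansion of $A$ (which the paper also only cites), the proposal establishes the theorem.
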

   The proof can be found in \cite{L}. The constant $A$ has been computed with high
   precision by R. P.  Brent and J. van de Lune.
   
   We  break the proof of Theorem \ref{T9.1} in several lemmas.
   
   \begin{lemma} The point $\sigma+it$ with $\sigma>A$ is a turning point
   for the function $\zeta(s)$ if and only if  
   \begin{equation}\label{turning}
   \sum_{p}\sum_{k=1}^\infty \frac{1}{k}\frac{\sin(kt\log p)}{p^{k\sigma}}=0
   \quad\text{and}\quad 
   \sum_{p}\sum_{k=1}^\infty \frac{\cos(kt\log p)}{p^{k\sigma}}\log p=0.
   \end{equation}
   \end{lemma}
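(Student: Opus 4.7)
My plan is to translate each of the two displayed equations, via the standard logarithmic Dirichlet series expansions, into an analytic condition on $\zeta$ and $\zeta'$, and then compare with the definition of a turning point. For $\sigma>1$ the series
\begin{displaymath}
\log\zeta(s)=\sum_p\sum_{k=1}^\infty\frac{1}{k\,p^{ks}},\qquad
\frac{\zeta'(s)}{\zeta(s)}=-\sum_p\sum_{k=1}^\infty\frac{\log p}{p^{ks}}
\end{displaymath}
both converge absolutely. Expanding $p^{-ks}=p^{-k\sigma}(\cos(kt\log p)-i\sin(kt\log p))$ and taking the imaginary part of the first series and the real part of the second, the two identities in the statement become exactly
\begin{displaymath}
\Imag\log\zeta(s)=0\quad\text{and}\quad \Real\frac{\zeta'(s)}{\zeta(s)}=0.
\end{displaymath}

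Next I would invoke Theorem~\ref{TLune}: for $\sigma>A$ we have $\Real\zeta(\sigma+it)>0$ for every $t\in\R$, so $\zeta$ has no zero on the vertical line $\Real s=\sigma$ and the principal argument $\arg\zeta(s)\in(-\pi/2,\pi/2)$ is well defined and continuous there. The logarithm furnished by the Dirichlet series is itself a continuous branch of $\log\zeta$ on the connected set $\{\sigma>1\}$ that takes real values on the real axis. By the standard uniqueness of continuous logarithms on a connected set, on $\{\sigma>A\}$ it must coincide with the principal branch $\log|\zeta(s)|+i\arg\zeta(s)$ (both agree at, for example, $s=2$). In particular $\Imag\log\zeta(s)=\arg\zeta(s)\in(-\pi/2,\pi/2)$ for $\sigma>A$, so the first equation is equivalent to $\arg\zeta(s)=0$, and combined with $\Real\zeta(s)>0$ this is equivalent to $\Imag\zeta(s)=0$.

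Once the first equation has been used to conclude that $\zeta(b)$ is a positive real number, the identity
\begin{displaymath}
\Real\frac{\zeta'(b)}{\zeta(b)}=\frac{\Real\zeta'(b)}{\zeta(b)}
\end{displaymath}
shows that the second displayed equation is equivalent to $\Real\zeta'(b)=0$. Combining the two equivalences, the pair of series equations is the same as the conjunction $\Imag\zeta(b)=0$ and $\Real\zeta'(b)=0$, which is the definition of a turning point.

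The one delicate step is the identification of branches of $\log\zeta$: a priori the imaginary part of the series-defined logarithm could differ from $\arg\zeta(s)$ by a nonzero multiple of $2\pi$, in which case the first equation would be strictly stronger than $\Imag\zeta(s)=0$. The hypothesis $\sigma>A$, through Theorem~\ref{TLune}, is precisely what rules out this ambiguity by forcing $\zeta(s)$ to stay in the open right half-plane and thereby confining $\arg\zeta(s)$ to $(-\pi/2,\pi/2)$.
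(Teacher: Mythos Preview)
Your proof is correct and follows essentially the same approach as the paper: both identify the two series equations with $\Imag\log\zeta(s)=0$ and $\Real(\log\zeta)'(s)=0$, and then use Theorem~\ref{TLune} (that $\Real\zeta(s)>0$ for $\sigma>A$) to pass from $\log\zeta$ back to $\zeta$. The only cosmetic difference is that the paper phrases the passage geometrically (``the curves $\Imag\zeta=0$ and $\Imag\log\zeta=0$ coincide, hence share turning points''), whereas you carry it out algebraically via $\Real(\zeta'/\zeta)=\Real(\zeta')/\zeta$ when $\zeta$ is positive real; your treatment of the branch identification is in fact slightly more explicit than the paper's.
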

   
   \begin{proof}
   By Theorem \ref{TLune} for 
   $\sigma > A = 1.192347\dots$  we have \mbox{$\Real \zeta(s) >0$}.  
   In the sequel $\log z$ will be the main branch of the logarithm 
   for $|\arg z|<\pi$, so that $\log \zeta(s)$ is  well defined and 
   analytic for $\sigma > A$.

   In view of $\log z=\log|z|+i\arg z$ it should be clear that, 
   for $\sigma > A$ the two functions $\zeta(s)$ and $\log\zeta(s)$ 
   are real at the same points, so that also the turning
   points of the loops $\Imag\zeta(s)=0$ and $\Imag\log\zeta(s)=0$ 
   are the same.
   \bigskip

   For $s$ real and $>1$ both functions $\zeta(s)$ and $\log\zeta(s)$ 
   are real so that we may write
   \begin{equation}
   \log\zeta(s)=\sum_p\log\Bigl(1-\frac{1}{p^s}\Bigr)^{-1}=
   \sum_p\sum_{k=1}^\infty\frac{1}{k}\frac{1}{p^{ks}},\qquad (\sigma>1)
   \end{equation}
   and this equality is true for $\sigma >A$ by  analytic continuation.  
   \bigskip

   Since the turning points for some function $f(s)$  are defined 
   as the solutions of the system of equations $\Imag f(s) =0$, $\Real f'(s)=0$, 
   the turning points of $\log \zeta(s)$ with $\sigma>A$ are 
   just those points  satisfying equations \eqref{turning}. 
   \end{proof}

   Now we introduce some notations. 
   We may write equations \eqref{turning} in the form
   \begin{equation}\label{turning2}
   \begin{aligned}
   -\sum_{k=1}^\infty \frac{1}{k}\frac{\sin(kt\log 2)}{2^{k\sigma}}
   &=\sum_{p\ge 3}\sum_{k=1}^\infty \frac{1}{k}
   \frac{\sin(kt\log p)}{p^{k\sigma}}\\
   -\sum_{k=1}^\infty \frac{\cos(kt\log 2)}{2^{k\sigma}}\log 2
   &=\sum_{p\ge 3}\sum_{k=1}^\infty 
   \frac{\cos(kt\log p)}{p^{k\sigma}}\log p.
   \end{aligned}
   \end{equation}
   For $\sigma>0$ and $t\in\R$ we now define
   \begin{equation}\label{deffl}
   f(\sigma,t):=\sum_{k=1}^\infty \frac{1}{k}\frac{\sin(kt\log 2)}{2^{k\sigma}}
   \end{equation}
   and 
   \begin{equation}\label{defgl}
   g(\sigma,t):=\frac{\partial}{\partial t} f(\sigma,t)=
   \sum_{k=1}^\infty \frac{\cos(kt\log 2)}{2^{k\sigma}} \log 2.
   \end{equation}
   Note that $f$ and $g$ are periodic functions of $t$ with 
   period $2\pi/\log2$. 

   So, a turning point $\sigma+it$ must satisfy
   \begin{multline*}
   -f(\sigma,t)=\sum_{p\ge 3}\sum_{k=1}^\infty \frac{1}{k}
   \frac{\sin(kt\log p)}{p^{k\sigma}}
   \quad\text{and}\\
   -g(\sigma,t)=\sum_{p\ge 3}\sum_{k=1}^\infty 
   \frac{\cos(kt\log p)}{p^{k\sigma}}\log p.
   \end{multline*}
   \medskip

   We now consider the function  
   \begin{displaymath}
   U(\sigma,t):=2^{2\sigma} f(\sigma,t)^2+
   \Bigl(\frac{2^\sigma}{\log2}\Bigr)^2g(\sigma,t)^2
   \end{displaymath}
   the choice of the coefficients  $2^{2\sigma} $  and  $(2^\sigma/\log2)^2$ 
   being motivated by  ($\,$use \eqref{deffl} and \eqref{defgl}$\,$)
   \begin{multline*}
   \lim_{\sigma\to+\infty}U(\sigma,t)=\\
   \lim_{\sigma\to+\infty}
   \Bigl\{2^{2\sigma}\Bigl(\sum_{k=1}^\infty 
   \frac{1}{k}\frac{\sin(kt\log 2)}{2^{k\sigma}}\Bigr)^2+
   \Bigl(\frac{2^\sigma}{\log2}\Bigr)^2
   \Bigl(\sum_{k=1}^\infty \frac{\cos(kt\log 2)}{2^{k\sigma}} \log 2\Bigr)^2\Bigr\}\\
   =\sin^2(t\log2)+\cos^2(t\log2)=1.
   \end{multline*}

   \begin{lemma}\label{Cauchy}
   Let $a$ and $b$ be arbitrary real numbers. Then there exist 
   real numbers $x$ and $y$ such that
   \begin{displaymath}
   ax+by=(a^2+b^2)^{1/2}\quad \text{and}\quad x^2+y^2=1.
   \end{displaymath}
   \end{lemma}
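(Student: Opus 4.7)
The plan is to exhibit explicit $x$ and $y$ that satisfy both equations; the lemma is really just the equality case of the Cauchy--Schwarz inequality in $\R^2$, and a direct construction is immediate.

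First I would dispose of the degenerate case $a=b=0$. In that case the first equation becomes $0=0$, so any point on the unit circle, for instance $(x,y)=(1,0)$, satisfies both conditions trivially.

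Assume now that $(a,b)\neq(0,0)$ and set $r:=\sqrt{a^2+b^2}>0$. The natural candidate is $x:=a/r$, $y:=b/r$, i.e.\ the unit vector pointing in the direction of $(a,b)$. A direct verification gives
\begin{displaymath}
x^2+y^2=\frac{a^2+b^2}{r^2}=1,\qquad ax+by=\frac{a^2+b^2}{r}=r=(a^2+b^2)^{1/2},
\end{displaymath}
which is what we want. Geometrically this simply says that the linear functional $(x,y)\mapsto ax+by$ on the unit circle attains its maximum value $\sqrt{a^2+b^2}$ at the unit vector parallel to $(a,b)$, and one could equally well quote the equality clause of Cauchy--Schwarz. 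There is no real obstacle; the only thing to be careful about is the separate treatment of $(a,b)=(0,0)$, where the normalization $a/r$, $b/r$ is undefined.
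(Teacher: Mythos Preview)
Your proof is correct and follows exactly the same approach as the paper: handle the degenerate case $a=b=0$ separately, and otherwise take $x=a/\sqrt{a^2+b^2}$, $y=b/\sqrt{a^2+b^2}$. The added geometric commentary is fine but not needed.
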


   \begin{proof} If $a^2+b^2=0$ then $a=b=0$ and we need only 
   take $x$ and $y$ such that $x^2+y^2=1$. 

   If $a^2+b^2\ne0$ then we can take $x=\frac{a}{\sqrt{a^2+b^2}}$ and
   $y=\frac{b}{\sqrt{a^2+b^2}}$.
   \end{proof}

   \begin{lemma}\label{Lturningineq}
   If $\sigma+it$ is a turning point of $\zeta(s)$ with $\sigma>A$, then 
   \begin{displaymath}
   U(\sigma,t)< \Bigl(\frac{2^\sigma}{\log2}\Bigr)^2
   \Bigl(\sum_{p\ge3} \sum_{k=1}^\infty\frac{\log p}{p^{ks}}\Bigr)^2.
   \end{displaymath}
   \end{lemma}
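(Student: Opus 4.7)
The plan is to linearise $\sqrt{U(\sigma,t)}$ via Lemma \ref{Cauchy} and then estimate termwise against the prime sum on the right. Applying Lemma \ref{Cauchy} with $a:=2^{\sigma}f(\sigma,t)$ and $b:=(2^{\sigma}/\log 2)\,g(\sigma,t)$ furnishes real numbers $x,y$ with $x^{2}+y^{2}=1$ and
\[
\sqrt{U(\sigma,t)}=2^{\sigma}x\,f(\sigma,t)+\frac{2^{\sigma}}{\log 2}\,y\,g(\sigma,t).
\]
Since $\sigma+it$ is a turning point with $\sigma>A$, the identities \eqref{turning2} express $-f$ and $-g$ as the corresponding sums over $p\ge 3$. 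Substituting yields
\[
\sqrt{U(\sigma,t)}=-\sum_{p\ge 3}\sum_{k=1}^{\infty}\frac{2^{\sigma}}{p^{k\sigma}}\Bigl(\frac{x\sin(kt\log p)}{k}+\frac{y\log p\,\cos(kt\log p)}{\log 2}\Bigr).
\]

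Next I would apply the triangle inequality and bound each summand by Cauchy--Schwarz. Writing $\alpha:=kt\log p$ and using $x^{2}+y^{2}=1$,
\[
\Bigl|\frac{x\sin\alpha}{k}+\frac{y\log p\,\cos\alpha}{\log 2}\Bigr|^{2}\le \frac{\sin^{2}\alpha}{k^{2}}+\frac{\log^{2}p\,\cos^{2}\alpha}{\log^{2}2}.
\]
The crucial elementary inequality is $1/k<\log p/\log 2$ for all $p\ge 3$ and $k\ge 1$, since $1/k\le 1<\log 3/\log 2$. This lets us dominate the right side by $(\log p/\log 2)^{2}(\sin^{2}\alpha+\cos^{2}\alpha)=(\log p/\log 2)^{2}$. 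Summing over $p\ge 3$ and $k\ge 1$ yields the weak form
\[
\sqrt{U(\sigma,t)}\le\frac{2^{\sigma}}{\log 2}\sum_{p\ge 3}\sum_{k=1}^{\infty}\frac{\log p}{p^{k\sigma}}.
\]

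The main obstacle is upgrading this to a strict inequality. The second Cauchy--Schwarz step is strict at any pair $(p,k)$ with $\sin(kt\log p)\ne 0$, because $1/k^{2}<\log^{2}p/\log^{2}2$ then contributes a strictly positive loss attached to the nonzero weight $\sin^{2}\alpha$. To rule out the remaining degenerate case, suppose $\sin(kt\log p)=0$ for every odd prime $p$ and every $k\ge 1$; in particular, $t\log p\in\pi\Z$ for each $p\ge 3$. Choosing any two distinct odd primes $p,q$ and writing $t\log p=m\pi$, $t\log q=n\pi$ forces $\log p/\log q=m/n\in\Q$ unless $t=0$, contradicting the $\Q$-linear independence of the logarithms of distinct primes used already in Lemma \ref{KH}. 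The alternative $t=0$ is excluded because $\zeta$ is strictly decreasing on $(1,\infty)$, so $\zeta'(\sigma)<0$ there and no real $\sigma>1$ is a turning point. Hence the chain of inequalities is strict, and squaring gives the assertion of the lemma.
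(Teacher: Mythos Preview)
Your proof is correct and follows essentially the paper's route: linearise $\sqrt{U}$ via Lemma~\ref{Cauchy}, substitute the turning-point identities \eqref{turning2}, and bound termwise using $1/k<\log p/\log 2$ for $p\ge3$. Your Cauchy--Schwarz pairing differs cosmetically (you pair $(x,y)$ against $(\sin\alpha/k,\ \cos\alpha\,\log p/\log 2)$ rather than $(x/k,\ y\log p/\log 2)$ against $(\sin\alpha,\cos\alpha)$), and your strictness argument via the $\Q$-independence of the $\log p$ and the exclusion of $t=0$ is in fact more careful than the paper's, which asserts the strict inequality directly from $1/k^2<\log^2 p/\log^2 2$ without addressing the degenerate case $x=0$.
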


   \begin{proof}
   We apply  Lemma \ref{Cauchy} to
   \begin{displaymath}
   a = -2^{\sigma} f(\sigma,t)
   \quad \text{and} \quad 
   b = -\frac{2^\sigma}{\log2}g(\sigma,t)
   \end{displaymath}
   to get
   \begin{displaymath}
   \Bigl\{2^{2\sigma} f(\sigma,t)^2+\Bigl(\frac{2^\sigma}{\log2}\Bigr)^2
   g(\sigma,t)^2\Bigr\}^{1/2} = -2^{\sigma} x f(\sigma,t)- 
   \frac{2^\sigma}{\log2} y g(\sigma,t)
   \end{displaymath}
   which, by \eqref{turning2}, may be written as
   \begin{multline*}
   U(\sigma,t)^{1/2}  =2^\sigma \sum_{p\ge 3}
   \sum_{k=1}^\infty \frac{x}{k}\frac{\sin(kt\log p)}{p^{k\sigma}}+\\+
   \frac{2^\sigma}{\log2}\sum_{p\ge 3}
   \sum_{k=1}^\infty \frac{y\cos(kt\log p)}{p^{k\sigma}}\log p=
   \\=
   2^\sigma\sum_{p\ge 3}\sum_{k=1}^\infty
   \Bigl(\frac{x}{k}\frac{\sin(kt\log p)}{p^{k\sigma}}+
   \frac{y\log p}{\log2}\frac{\cos(kt\log p)}{p^{k\sigma}}\Bigr).
   \end{multline*}
   Applying the Cauchy-Schwarz inequality to the right hand side 
   we obtain the condition
   \begin{multline}\label{nueve}
   U(\sigma,t)^{1/2} \le \\ \le2^\sigma\sum_{p\ge 3}
   \sum_{k=1}^\infty\Bigl(\frac{x^2}{k^2}+
   \frac{y^2\log^2 p}{\log^22}\Bigr)^{1/2}
   \Bigl(\frac{\sin^2(kt\log p)+\cos^2(kt\log p)}{p^{2k\sigma}}\Bigr)^{1/2}.
   \end{multline}
   Now observe that in \eqref{nueve} $\frac{1}{k^2}<\frac{\log^2p}{\log^22}$ 
   so that
   \begin{displaymath}
   \frac{x^2}{k^2}+\frac{y^2\log^2 p}{\log^22}< 
   \frac{(x^2+y^2)\log^2 p}{\log^22}\le 
   \frac{\log^2 p}{\log^22}.
   \end{displaymath}
   Using this we thus obtain the condition
   \begin{displaymath}
   U(\sigma,t)^{1/2}< 2^\sigma\sum_{p\ge 3}
   \sum_{k=1}^\infty\frac{\log p}{\log2}\frac{1}{p^{k\sigma}}
   \end{displaymath}
   or 
   \begin{displaymath}
   U(\sigma,t)<
   \Bigl(\frac{2^\sigma}{\log2}\Bigr)^2
   \Bigl(\sum_{p\ge 3}\sum_{k=1}^\infty
   \frac{\log p}{p^{k\sigma}}\Bigr)^2
   \end{displaymath}
   as we wanted to show.
   \end{proof}

   For $\sigma>1$ we define 
   \begin{equation}\label{defH}
   H(\sigma):=\Bigl(\frac{2^\sigma}{\log2}\Bigr)^2
   \Bigl(\sum_{p\ge3}\sum_{k=1}^\infty\frac{\log p}{p^{k\sigma}}\Bigr)^2.
   \end{equation}

   \begin{lemma}\label{lemau}
   For each $t\in\R$ there exists a largest solution $u(t)$ 
   to the equation in $\sigma$
   \begin{equation}\label{eqdefu}
   U(\sigma,t)=H(\sigma)
   \end{equation}
   and
   \begin{displaymath}
   U(\sigma,t)>H(\sigma), \qquad (\sigma>u(t)).
   \end{displaymath}
   \end{lemma}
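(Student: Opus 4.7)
The plan is to study the continuous auxiliary function $\phi(\sigma) := U(\sigma,t) - H(\sigma)$ on $(1,\infty)$ and identify its largest zero. Three ingredients suffice: continuity of $\phi$ on $(1,\infty)$, the limit $\phi(\sigma) \to 1$ as $\sigma \to \infty$, and divergence $\phi(\sigma) \to -\infty$ as $\sigma \to 1^+$. Granted these, the intermediate value theorem produces a zero, and a standard supremum argument isolates the \emph{largest} one.

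Continuity of $U(\cdot,t)$ on the half-plane $\sigma > 0$ follows from the uniform convergence on compacta of the series \eqref{deffl} and \eqref{defgl}. For $H$, the Euler product identity gives $\sum_{p\ge 3}\sum_{k\ge 1}\log p/p^{k\sigma} = -\zeta'(\sigma)/\zeta(\sigma) - (\log 2)/(2^\sigma-1)$, which is continuous on $(1,\infty)$. The limit $U(\sigma,t)\to 1$ as $\sigma\to\infty$ is the very computation motivating the coefficients $2^{2\sigma}$ and $(2^\sigma/\log 2)^2$ (already carried out in the paper). For $H$, the $p=3$, $k=1$ term dominates, so $H(\sigma) \sim (\log 3/\log 2)^2 (2/3)^{2\sigma} \to 0$, and hence $\phi(\sigma) \to 1$. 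As $\sigma\to 1^+$, absolute convergence gives the crude bounds $|f(\sigma,t)| \le \sum_k 2^{-k\sigma}/k$ and $|g(\sigma,t)| \le \log 2\sum_k 2^{-k\sigma}$, so $U(\sigma,t)$ stays bounded; meanwhile $\zeta$ has a simple pole at $1$, so $-\zeta'(\sigma)/\zeta(\sigma) \sim 1/(\sigma-1)$ and $H(\sigma) \to \infty$. Thus $\phi(\sigma) \to -\infty$.

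With these ingredients, set $Z := \{\sigma > 1 : \phi(\sigma) \le 0\}$. The set $Z$ is nonempty (by the behavior near $1$) and bounded above (by the behavior at infinity), so $u(t) := \sup Z$ exists and is finite. Continuity makes $Z$ closed in $(1,\infty)$, so $u(t) \in Z$, i.e.\ $\phi(u(t)) \le 0$. If the inequality were strict, continuity would yield a neighborhood of $u(t)$ on which $\phi < 0$, placing points strictly larger than $u(t)$ into $Z$, contradicting the supremum. Hence $\phi(u(t)) = 0$, and for every $\sigma > u(t)$ we have $\sigma \notin Z$, giving $U(\sigma,t) > H(\sigma)$ as required.

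The main obstacle is purely technical: pinning down the two endpoint asymptotics. At $\infty$ it is a termwise comparison essentially written out already in the paper; at $1^+$ the content is the interplay between absolute convergence (keeping $U$ bounded) and the classical behavior of $\zeta'/\zeta$ at the pole (driving $H$ to infinity). Everything else reduces to elementary continuity and the intermediate value theorem.
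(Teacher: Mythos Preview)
Your proof is correct and follows essentially the same approach as the paper's: both rely on continuity of $U(\cdot,t)$ and $H$, the limit $U(\sigma,t)\to 1$ and $H(\sigma)\to 0$ as $\sigma\to\infty$, and the divergence $H(\sigma)\to+\infty$ near $\sigma=1^+$ against a bounded $U$. The only cosmetic difference is that the paper defines $u(t)$ as $\inf\{a:U(\sigma,t)>H(\sigma)\text{ for all }\sigma>a\}$ while you define it as $\sup\{\sigma>1:\phi(\sigma)\le 0\}$; these are the same number, and your version makes the verification that $u(t)$ is actually a root of $\phi$ more explicit than the paper's ``it is clear''.
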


   \begin{proof}
   By \eqref{defH} it is easily seen that $H(\sigma)$ is  continuous 
   and strictly decreasing for $\sigma>1$ from $+\infty$ to $0$. 
   In particular 
   \begin{displaymath}
   \lim_{\sigma\to\infty}H(\sigma)=0.
   \end{displaymath}

   Since $U(\sigma,t)$ is continuous for $\sigma>0$ and $t\in\R$, and
   \begin{displaymath}
   \lim_{\sigma\to+\infty} U(\sigma,t)=1
   \end{displaymath}
   we see that for every $t$ the infimum $u(t)$ of the $a$ such that
   $U(\sigma,t)>H(\sigma)$ for $\sigma>a$ exists and is larger than $1$. 

   From this it is clear that $u(t)$ must be a solution of  
   equation \eqref{eqdefu}
   in $\sigma$.
   \end{proof} 

   \begin{lemma}\label{closed}
   We have the closed formulas
   \begin{multline*}
   f(\sigma,t) = \arctan
   \frac{\sin(t\log2)}{2^\sigma-\cos(t\log 2)},\\
   g(\sigma,t)=-\frac{(1-2^\sigma\cos(t\log2))\log2}
   {1+4^\sigma-2^{1+\sigma}\cos(t\log2)}.
   \end{multline*}
   \end{lemma}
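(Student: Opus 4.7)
The strategy is to recognize both $f$ and $g$ as real/imaginary parts of standard geometric or logarithmic power series evaluated at $z = e^{it\log 2}/2^\sigma$, which has $|z| = 2^{-\sigma} < 1$ whenever $\sigma > 0$, guaranteeing absolute convergence and allowing unrestricted manipulation.

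\textbf{Step 1: Closed form for $f$.} I would start from the classical expansion $-\log(1-z) = \sum_{k=1}^\infty z^k/k$ for $|z|<1$. Taking imaginary parts with $z = e^{it\log 2}/2^\sigma$ gives
\[
f(\sigma,t) = \Imag\Bigl(-\log\Bigl(1-\frac{e^{it\log 2}}{2^\sigma}\Bigr)\Bigr) = -\arg\Bigl(1-\frac{e^{it\log 2}}{2^\sigma}\Bigr).
\]
The argument on the right is the complex number $\frac{2^\sigma - \cos(t\log 2)}{2^\sigma} - i\frac{\sin(t\log 2)}{2^\sigma}$, whose real part is strictly positive for $\sigma > 0$ because $2^\sigma > 1 \ge \cos(t\log 2)$. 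Hence the principal branch of $\arg$ is given by $\arctan$ of (imaginary part)/(real part), which after the minus sign yields exactly the claimed formula.

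\textbf{Step 2: Closed form for $g$.} For $g$ I would use the geometric series $z/(1-z) = \sum_{k=1}^\infty z^k$. With the same $z$ and multiplying by $\log 2$, taking real parts produces
\[
g(\sigma,t) = \log 2 \cdot \Real\frac{z}{1-z}.
\]
Rationalizing by multiplying numerator and denominator by $1-\bar z$ gives
\[
\Real\frac{z}{1-z} = \frac{2^\sigma\cos(t\log 2) - 1}{4^\sigma\,|1-z|^2}, \qquad 4^\sigma |1-z|^2 = 1 + 4^\sigma - 2^{1+\sigma}\cos(t\log 2),
\]
which, after pulling out a minus sign, produces the second closed form. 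Alternatively, since $g = \partial f/\partial t$ by \eqref{defgl}, one may simply differentiate the expression obtained in Step~1; the identity $\cos^2 + \sin^2 = 1$ collapses the numerator to $\log 2\,(2^\sigma\cos(t\log 2) - 1)$ and the denominator $1+u^2$ (with $u$ the argument of $\arctan$) gives the claimed denominator after clearing the common factor $(2^\sigma-\cos(t\log 2))^2$.

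\textbf{Main obstacle.} There is no serious analytic obstacle here; the only point requiring care is ensuring the arctan in Step~1 is the principal value, which reduces to verifying $\Real(1-z) > 0$ for $\sigma > 0$. Everything else is a bookkeeping calculation with trigonometric identities, so the proof is essentially a one-paragraph manipulation once the power-series identification is made.
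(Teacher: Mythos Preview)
Your proposal is correct and follows essentially the same route as the paper: the paper's one-line proof is ``the first follows from the identity $f(\sigma,t)=\Imag(\log(1-2^{-s}))$, and the second by differentiation,'' which is exactly your Step~1 (up to the harmless choice of $z=e^{it\log2}/2^\sigma$ versus its conjugate $2^{-s}$) together with your alternative derivation of $g$. Your explicit check that $\Real(1-z)>0$ so that the principal $\arctan$ applies is the one detail the paper leaves implicit.
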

   
   \begin{proof}
   The first follows from the identity $f(\sigma,t)=\Imag(\log(1-2^{-s}))$,
   and the second by differentiation.
   \end{proof}

   \begin{lemma}\label{u=E}
   We have $u(\pi/\log2)=E$.
   \end{lemma}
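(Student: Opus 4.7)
The plan is to specialize $U$ and $H$ at $t_0 := \pi/\log 2$, and to show that the resulting equation in $\sigma$ is literally the defining equation \eqref{1} of $E$.

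First I would evaluate $f$ and $g$ at $t_0$ directly from \eqref{deffl} and \eqref{defgl}. Since $kt_0\log 2 = k\pi$, every sine term vanishes and every cosine term reduces to $(-1)^k$; hence $f(\sigma,t_0) = 0$, and the remaining geometric series gives
$$g(\sigma,t_0) = \log 2 \sum_{k=1}^{\infty} \frac{(-1)^k}{2^{k\sigma}} = -\frac{\log 2}{2^\sigma + 1}$$
(consistent with Lemma \ref{closed} at $\cos(t_0\log 2) = -1$). Plugging into the definition of $U$ gives the clean expression
$$U(\sigma, t_0) = \Bigl(\frac{2^\sigma}{\log 2}\Bigr)^2 g(\sigma,t_0)^2 = \frac{2^{2\sigma}}{(2^\sigma+1)^2}.$$

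Next I would rewrite the inner double sum in $H(\sigma)$ via
$$\sum_{p\ge 3}\sum_{k=1}^{\infty}\frac{\log p}{p^{k\sigma}} = \sum_{p\ge 3}\frac{\log p}{p^\sigma - 1} = -\frac{\zeta'(\sigma)}{\zeta(\sigma)} - \frac{\log 2}{2^\sigma - 1}.$$
Both sides of $U(\sigma,t_0) = H(\sigma)$ being positive for $\sigma > 1$, I may take positive square roots; the equation reduces to
$$\frac{\log 2}{2^\sigma+1} = -\frac{\zeta'(\sigma)}{\zeta(\sigma)} - \frac{\log 2}{2^\sigma - 1},$$
and moving the last fraction to the left and combining denominators produces $\frac{2^{\sigma+1}\log 2}{4^\sigma - 1} = -\zeta'(\sigma)/\zeta(\sigma)$, which is exactly \eqref{1}. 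Therefore $\sigma = E$ is a solution of $U(\sigma,t_0) = H(\sigma)$.

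To conclude that $u(\pi/\log 2) = E$ via Lemma \ref{lemau}, I would verify that $U(\sigma,t_0) > H(\sigma)$ for every $\sigma > E$. Reversing the manipulations above, this is equivalent to
$$(2^\sigma+1)\sum_{p\ge 3}\frac{\log p}{p^\sigma - 1} < \log 2.$$
But each summand $(2^\sigma+1)\log p/(p^\sigma-1)$ is strictly decreasing in $\sigma$ for $p\ge 3$ (this is precisely the monotonicity already used in the proof of Theorem \ref{T5.1}), and the whole left side equals $\log 2$ at $\sigma = E$ by the defining property of $E$. So the inequality holds for $\sigma > E$, Lemma \ref{lemau} gives $u(t_0) \le E$, and together with the fact that $E$ solves $U(\cdot,t_0)=H$ we get the equality $u(\pi/\log 2) = E$. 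The only place requiring real care is keeping track of signs so that taking positive square roots and clearing denominators are valid operations; once that is in order, the whole argument is just algebraic bookkeeping and an appeal to the monotonicity established earlier.
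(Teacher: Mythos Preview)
Your proof is correct and follows essentially the same route as the paper: compute $f(\sigma,t_0)=0$, $g(\sigma,t_0)=-\log 2/(2^\sigma+1)$, rewrite $H(\sigma)$ via $-\zeta'/\zeta-\log 2/(2^\sigma-1)$, and reduce $U(\sigma,t_0)=H(\sigma)$ to equation~\eqref{1}. The only difference is that the paper finishes in one line by invoking the \emph{uniqueness} of the solution of~\eqref{1} (already proved in Theorem~\ref{T5.1}), whereas you re-verify via monotonicity that $U(\sigma,t_0)>H(\sigma)$ for $\sigma>E$; this extra step is sound but redundant, since the monotonicity you cite is exactly what gave uniqueness in the first place.
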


   \begin{proof}
   We have
   \begin{displaymath}
   \sum_{p\ge3}\sum_{k=1}^\infty\frac{\log p}{p^{k\sigma}}=
   \sum_{p}\sum_{k=1}^\infty\frac{\log p}{p^{k\sigma}}-
   \sum_{k=1}^\infty\frac{\log 2}{2^{k\sigma}}=
   -\frac{\zeta'(\sigma)}{\zeta(\sigma)}-\frac{\log2}{2^\sigma-1}
   \end{displaymath}
   so that 
   \begin{displaymath}
   H(\sigma)=\Bigl(\frac{2^\sigma}{\log2}\Bigr)^2
   \Bigl(\frac{\zeta'(\sigma)}{\zeta(\sigma)}+
   \frac{\log2}{2^\sigma-1}\Bigr)^2.
   \end{displaymath}
   By its definition $u(t)$ is the largest solution of the 
   equation $U(\sigma,t)=H(\sigma)$. 

   For $t=\pi/\log2$ we have
   \begin{displaymath}
   f(\sigma,t)=\sum_{k=1}^\infty \frac{1}{k}
   \frac{\sin(kt\log2)}{2^{k\sigma}}=
   \sum_{k=1}^\infty \frac{1}{k}\frac{\sin(k\pi)}{2^{k\sigma}}=0
   \end{displaymath}
   and
   \begin{multline*}
   g(\sigma,t)=\sum_{k=1}^\infty \frac{\cos(kt\log2)}{2^{k\sigma}}\log2=
   \sum_{k=1}^\infty \frac{\cos(k\pi)}{2^{k\sigma}}\log2=\\=
   \sum_{k=1}^\infty \frac{(-1)^k}{2^{k\sigma}}\log2
   =-\frac{\log2}{2^\sigma+1}
   \end{multline*}
   so that  $u(\pi/2)$ satisfies the equation 
   \begin{displaymath}
   \Bigl(\frac{2^\sigma}{\log2}\Bigr)^2
   \Bigl(\frac{\log2}{2^\sigma+1}\Bigr)^2=
   \Bigl(\frac{2^\sigma}{\log2}\Bigr)^2
   \Bigl(\frac{\zeta'(\sigma)}{\zeta(\sigma)}+
   \frac{\log2}{2^\sigma-1}\Bigr)^2.
   \end{displaymath}
   Since $\frac{\zeta'(\sigma)}{\zeta(\sigma)}+
   \frac{\log2}{2^\sigma-1}<0$ this is equivalent to 
   \begin{displaymath}
   \frac{\log2}{2^\sigma+1}=-\frac{\zeta'(\sigma)}{\zeta(\sigma)}-
   \frac{\log2}{2^\sigma-1}
   \end{displaymath}
   or
   \begin{displaymath}
   \frac{2^{\sigma+1}}{4^\sigma-1}\log 2=
   -\frac{\zeta'(\sigma)}{\zeta(\sigma)}.
   \end{displaymath}
   But $E$ is the unique solution of this equation for 
   $\sigma>1$ ( see  Theorem \ref{T5.1} ). 

   Hence $u(\pi/\log2)=E$. 
   \end{proof}

   \begin{lemma}\label{lemaUmH}
   For all $\sigma>1$ and all $t\in\R$ we have
   \begin{equation}\label{Uineq}
   U(\sigma,t)\ge U(\sigma,\pi/\log2).
   \end{equation}
   \end{lemma}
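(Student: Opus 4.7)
The plan is to use the closed formulas of Lemma~\ref{closed} to reduce the inequality to a question about a one-parameter family of points $(\alpha,\beta)$ in the plane. Set $x := 2^\sigma$ (so $x > 2$) and make the substitution $\psi := \pi - t\log 2$; then a direct substitution in the formulas of Lemma~\ref{closed} gives $U(\sigma,t) = x^2(\alpha^2 + \beta^2)$, where
\[
\alpha := \arctan\!\Bigl(\frac{\sin\psi}{x+\cos\psi}\Bigr),\qquad
\beta := \frac{1 + x\cos\psi}{x^2 + 2x\cos\psi + 1}.
\]
At $t = \pi/\log 2$ one has $\psi = 0$, hence $\alpha = 0$ and $\beta = 1/(x+1)$, so $U(\sigma,\pi/\log 2) = x^2/(x+1)^2$. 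The lemma therefore reduces to proving $\alpha^2 + \beta^2 \ge 1/(x+1)^2$.

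The key observation is that $u := 1 + e^{i\psi}/x$ traces the circle $|u-1| = 1/x$, and that $\alpha = \arg u$ while $\beta = \Real((u-1)/u)$. Writing the constraint $|u-1|^2 = 1/x^2$ in terms of $\alpha$ and $\beta$ (using $\cos\alpha = \Real u/|u|$) leads, after a direct calculation, to the algebraic identity
\[
x^2\cos^2\alpha\,(1-2\beta) = (x^2 - 1)(1-\beta)^2.
\]
Solving for $\cos^2\alpha$ and using $\sin^2\alpha = 1 - \cos^2\alpha$ lets me express $\sin^2\alpha + \beta^2$ purely in terms of $\beta$, namely
\[
\sin^2\alpha + \beta^2 = \frac{(1-\beta)^2 - 2x^2\beta^3}{x^2(1-2\beta)}.
\]
Since $\beta$ is monotone increasing in $\cos\psi$, it ranges over $[-1/(x-1),\,1/(x+1)]$; in particular $1 - 2\beta > 0$. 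The elementary inequality $\alpha^2 \ge \sin^2\alpha$ (tight only at $\alpha = 0$, which is exactly the candidate extremum) then reduces the lemma to the purely algebraic inequality
\[
N(\beta) := (x+1)^2\bigl[(1-\beta)^2 - 2x^2\beta^3\bigr] - x^2(1 - 2\beta) \ge 0
\]
on the interval $\beta \in [-1/(x-1),\,1/(x+1)]$.

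The main technical step is to prove this polynomial inequality. Since $\beta = 1/(x+1)$ is the candidate equality point, I would check that it is a root of $N$ and then factor out $(1 - (x+1)\beta)$, obtaining
\[
N(\beta) = \bigl(1 - (x+1)\beta\bigr)\bigl[2x^2(x+1)\beta^2 + (2x+1)(x-1)\beta + (2x+1)\bigr].
\]
The linear factor is non-negative on the interval in question. For the quadratic factor, its discriminant simplifies to $(2x+1)\bigl[(2x+1)(x-1)^2 - 8x^2(x+1)\bigr] = (2x+1)(-6x^3 - 11x^2 + 1)$, which is manifestly negative for $x > 1$; hence the quadratic is strictly positive and $N(\beta) \ge 0$, proving the lemma. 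Equality in \eqref{Uineq} occurs precisely when $\psi \equiv 0 \pmod{2\pi}$, i.e.\ $t \equiv \pi/\log 2 \pmod{2\pi/\log 2}$.
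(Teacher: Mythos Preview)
Your proof is correct and takes a genuinely different route from the paper. The paper substitutes $x=2^{-\sigma}$, $\varphi=t\log 2$ and then splits by the value of $\varphi$: on $(\pi/2,\pi)$ it differentiates the left-hand side of the inequality and shows the $\varphi$-derivative is negative (so the minimum on that range is attained at $\varphi=\pi$), while on $(0,\pi/2)$ it uses $\sin^2\alpha\le\alpha^2$ together with a further subcase split on whether $1+x^2-2x\cos\varphi\gtrless 1$. You instead shift the angle so that the candidate minimum sits at $\psi=0$, read off the algebraic constraint $x^2\cos^2\alpha\,(1-2\beta)=(x^2-1)(1-\beta)^2$ directly from the circle $|u-1|=1/x$, apply $\alpha^2\ge\sin^2\alpha$ once and uniformly, and reduce everything to the single cubic inequality $N(\beta)\ge0$, which you dispose of by an explicit factorization with a quadratic factor of negative discriminant. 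Your argument is cleaner and entirely avoids the case analysis; the paper's argument is more elementary step by step but requires tracking several regimes. Both rely on the same basic trick $\alpha^2\ge\sin^2\alpha$, but you use it globally whereas the paper only invokes it on part of the range.
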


   \begin{proof}
   We have computed $U(\sigma,\pi/\log2)$ in the proof of Lemma \ref{u=E}. 
   Substituting  this value and the definition of $U(\sigma,t)$, 
   \eqref{Uineq} may be written
   \begin{displaymath}
   2^{2\sigma} f(\sigma,t)^2+\Bigl(\frac{2^\sigma}{\log2}\Bigr)^2
   g(\sigma,t)^2 \ge \Bigl(\frac{2^\sigma}{\log2}\Bigr)^2
   \Bigl(\frac{\log2}{2^\sigma+1}\Bigr)^2.
   \end{displaymath}
   In view of Lemma \ref{closed} we thus need to prove
   \begin{multline}\label{B1}
   \arctan^2\Bigl(\frac{\sin(t\log2)}{2^\sigma-\cos(t\log 2)}\Bigr)
   +\Bigl(\frac{(1-2^\sigma\cos(t\log2))}
   {1+4^\sigma-2^{1+\sigma}\cos(t\log2)}\Bigr)^2\ge \\ \ge
   \Bigl(\frac{1}{2^{\sigma}+1}\Bigr)^2.
   \end{multline}
 
   We change notations  putting $t\log2=\varphi$ and $2^{\sigma}=x^{-1}$,  
   so that we have to prove for $0<x<1$ and $0<\varphi<2\pi$
   \begin{equation}\label{A3}
   u(x,\varphi):=\arctan^2\Bigl(\frac{x\sin\varphi}{1-x\cos\varphi}\Bigr)
   +\Bigl(\frac{x(x-\cos\varphi)}{1+x^2-2x\cos\varphi}\Bigr)^2
   \ge \Bigl(\frac{x}{1+x}\Bigr)^2.
   \end{equation}
   The right hand side is the value for $\varphi=\pi$ of the left hand side.

   So, we want to prove that  $u(x,\varphi)$ has an absolute minimum 
   at $\varphi=\pi$.
   It is easy to show that $u(x,\pi-\theta)=u(x,\pi+\theta)$. 
   So,   we only have to prove inequality \eqref{A3} for $0<\varphi<\pi$. 
   We will split the proof in two cases.
   \bigskip

   (1) Proof of \eqref{A3} for  $\frac{\pi}{2}<\varphi<\pi$.

   If we differentiate $u(x,\varphi)$ with respect to $\varphi$ 
   and simplify we  arrive at
   \begin{multline}\label{A4}
   u_\varphi(x,\varphi) = 
   \frac{2x(x-\cos \varphi)}{(1+x^2-2x\cos\varphi)^3}
   \Bigl\{-\arctan\Bigl(\frac{x\sin\varphi}{1-x\cos\varphi}\Bigr)
   \times \\ \times
   (1+x^2-2x\cos\varphi)^2+x(1-x^2)\sin\varphi\Bigr\}.
   \end{multline}
   We will show that $u_\varphi(x,\varphi)<0$ for $\frac{\pi}{2}<\varphi<\pi$, 
   so that \eqref{A3} will follow.

   In this interval $\cos\varphi<0$ and $\sin\varphi>0$. The first factor
   in the right hand side of \eqref{A4} is positive, and we will show 
   that the second is negative. That is we will show that 
   \begin{equation}\label{A5}
   x(1-x^2)\sin\varphi\le \arctan
   \Bigl(\frac{x\sin\varphi}{1-x\cos\varphi}\Bigr) 
   (1+x^2-2x\cos\varphi)^2.
   \end{equation}
   Let 
   \begin{equation}\label{B2}
   \alpha=\arctan\Bigl(\frac{x\sin\varphi}{1-x\cos\varphi}\Bigr),
   \quad \tan\alpha=\frac{x\sin\varphi}{1-x\cos\varphi},
   \end{equation}
   \begin{displaymath}
   \frac{1}{\cos^2\alpha}=
   1+\Bigl(\frac{x\sin\varphi}{1-x\cos\varphi}\Bigr)^2=
   \frac{1+x^2-2x\cos\varphi}{(1-x\cos\varphi)^2},
   \end{displaymath}
   \begin{displaymath}
   \cos^2\alpha=\frac{(1-x\cos\varphi)^2}{1+x^2-2x\cos\varphi},
   \end{displaymath}
   \begin{equation}\label{B3}
   \sin^2\alpha=1-\frac{(1-x\cos\varphi)^2}{1+x^2-2x\cos\varphi}=
   \frac{x^2-x^2\cos^2\varphi}{1+x^2-2x\cos\varphi}=
   \frac{x^2\sin^2\varphi}{1+x^2-2x\cos\varphi}
   \end{equation}
   so that 
   \begin{displaymath}
   \sin\alpha=\frac{x\sin\varphi}{\sqrt{1+x^2-2x\cos\varphi}}
   \end{displaymath}
   (with the sign $+$ since certainly $\alpha\in(0,\pi/2)$, since $\tan\alpha>0$).

   Now we have
   \begin{displaymath}
   1-x^2<1<(1+x^2-2x\cos\varphi)^{3/2}
   \end{displaymath}
   so that
   \begin{displaymath}
   x(1-x^2)\sin\varphi\le x\sin\varphi (1+x^2-2x\cos\varphi)^{3/2}
   \end{displaymath}
   and  
   \begin{displaymath}
   x(1-x^2)\sin\varphi\le \sin\alpha (1+x^2-2x\cos\varphi)^2\le
   \alpha (1+x^2-2x\cos\varphi)^2
   \end{displaymath}
   which  is equivalent to \eqref{A5}.
   \medskip

   (2) Proof of \eqref{A3} for  $0<\varphi<\frac{\pi}{2}$.

   Defining $\alpha$ as in \eqref{B2}, $\sin^2\alpha$ is still given by \eqref{B3}.
   Although in this case we do not know the sign of  $\sin\alpha$, 
   inequality \eqref{A3} will still  follow from
   \begin{equation}\label{A6}
   \frac{x^2\sin^2\varphi}{1+x^2-2x\cos\varphi}+
   \Bigl(\frac{x(x-\cos\varphi)}{1+x^2-2x\cos\varphi}\Bigr)^2
   \ge \Bigl(\frac{x}{1+x}\Bigr)^2
   \end{equation}
   since $\sin^2\alpha<\alpha^2$.

   To prove \eqref{A6} we consider two cases.
   \medskip

   (2a) Proof of \eqref{A6} when $1+x^2-2x\cos\varphi>1$. 

   Then  $(1+x^2-2x\cos\varphi)^2>1+x^2-2x\cos\varphi$, so that 
   \begin{multline*}
   \frac{\sin^2\varphi}{1+x^2-2x\cos\varphi}+
   \frac{(x-\cos\varphi)^2}{(1+x^2-2x\cos\varphi)^2}\ge \\ \ge
   \frac{\sin^2\varphi}{(1+x^2-2x\cos\varphi)^2}+
   \frac{(x-\cos\varphi)^2}{(1+x^2-2x\cos\varphi)^2}=
   \\=\frac{1+x^2-2x\cos\varphi}{(1+x^2-2x\cos\varphi)^2}=
   \frac{1}{1+x^2-2x\cos\varphi}.
   \end{multline*}
   Recall that $0<\varphi<\frac{\pi}{2}$. Then  $-2x\cos\varphi<2x$, 
   so that $1+x^2-2x\cos\varphi<1+x^2+2x=(1+x)^2$, and we obtain
   \begin{displaymath}
   \frac{\sin^2\varphi}{1+x^2-2x\cos\varphi}+
   \frac{(x-\cos\varphi)^2}{(1+x^2-2x\cos\varphi)^2}> 
   \frac{1}{(1+x)^2}.
   \end{displaymath}
   \medskip

   (2b) Proof of \eqref{A6} when $1+x^2-2x\cos\varphi\le1$.
   In this case $(1+x^2-2x\cos\varphi)^2\le 1+x^2-2x\cos\varphi$ so that 
   \begin{multline*}
   \frac{\sin^2\varphi}{1+x^2-2x\cos\varphi}+
   \frac{(x-\cos\varphi)^2}{(1+x^2-2x\cos\varphi)^2}\ge \\ \ge
   \frac{\sin^2\varphi}{(1+x^2-2x\cos\varphi)}+
   \frac{(x-\cos\varphi)^2}{(1+x^2-2x\cos\varphi)}=\\
   =\frac{1+x^2-2x\cos\varphi}{1+x^2-2x\cos\varphi}=1>\frac{1}{(1+x)^2}.
   \end{multline*}
   \end{proof}

   \begin{lemma}\label{Lmax}
   For each $t\in\R$ we have $u(t)\le u(\pi/\log2)$.
   \end{lemma}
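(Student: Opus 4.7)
The plan is to derive Lemma \ref{Lmax} as a short consequence of the two technical lemmas already in place, namely Lemma \ref{lemau} (which characterizes $u(t)$) and Lemma \ref{lemaUmH} (which gives the pointwise inequality $U(\sigma,t) \ge U(\sigma,\pi/\log 2)$). All the real work was absorbed into those lemmas, so here only a clean contradiction argument is needed.

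Concretely, I would argue by contradiction. Suppose, for some $t \in \R$, that $u(t) > u(\pi/\log 2)$. Set $\sigma_0 := u(t)$. By Lemma \ref{lemau} applied to the value $\pi/\log 2$, the strict inequality $U(\sigma,\pi/\log 2) > H(\sigma)$ holds for every $\sigma > u(\pi/\log 2)$; in particular, since $\sigma_0 > u(\pi/\log 2)$, we obtain
\begin{displaymath}
U(\sigma_0, \pi/\log 2) > H(\sigma_0).
\end{displaymath}
Invoking Lemma \ref{lemaUmH} at $\sigma = \sigma_0$, we then get
\begin{displaymath}
U(\sigma_0, t) \;\ge\; U(\sigma_0, \pi/\log 2) \;>\; H(\sigma_0).
\end{displaymath}
But $\sigma_0 = u(t)$ is, by Lemma \ref{lemau}, a solution of $U(\sigma,t) = H(\sigma)$, so $U(\sigma_0, t) = H(\sigma_0)$. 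This contradicts the strict inequality just derived. Hence $u(t) \le u(\pi/\log 2)$ for every $t$.

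I do not expect any genuine obstacle here: the delicate analysis — comparing the trigonometric expression $U(\sigma,t)$ with its value at $t=\pi/\log 2$ — has already been carried out in Lemma \ref{lemaUmH} via the closed forms of Lemma \ref{closed} and the case analysis on $\varphi$. Lemma \ref{Lmax} is just the packaging step, and after it is established one immediately gets, by combining it with Lemma \ref{u=E}, the bound $u(t) \le E$ for all $t$, which is exactly the ingredient needed to finish the proof of Theorem \ref{T9.1}.
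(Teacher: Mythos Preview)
Your proposal is correct and follows essentially the same approach as the paper: combine Lemma~\ref{lemau} (applied at $t=\pi/\log 2$) with Lemma~\ref{lemaUmH} to see that $U(\sigma,t)>H(\sigma)$ for all $\sigma>u(\pi/\log 2)$, and then use that $U(u(t),t)=H(u(t))$ to force $u(t)\le u(\pi/\log 2)$. The only cosmetic difference is that you phrase it as a contradiction argument while the paper states it directly.
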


   \begin{proof}
   By Lemma \ref{lemau}
   \begin{displaymath}
   U(\sigma,\pi/\log2)>H(\sigma) \quad\text{for}\quad 
   \sigma>u(\pi/\log2)
   \end{displaymath}
   and  by Lemma \ref{lemaUmH}
   \begin{displaymath}
   U(\sigma,t)\ge U(\sigma,\pi/\log2).
   \end{displaymath}
   It follows that 
   \begin{displaymath}
   U(\sigma,t)>H(\sigma),\qquad (\sigma>u(\pi/\log2)).
   \end{displaymath}
   By definition $U(\sigma,t)>H(\sigma)$ is not true for $\sigma=u(t)$, 
   and it follows that $u(t)\le u(\pi/\log2)$.
   \end{proof}

   \begin{proof}[Proof of the first half of Theorem \ref{T9.1}]
   Let $\sigma+it$ be a turning point for $\zeta(s)$.  
   It is clear that  $\sigma\le A=1.192\dots$ implies  $\sigma<E=2.813\dots$.
   For $\sigma>A$,  by Lemma \ref{Lturningineq} we will have 
   \begin{displaymath}
   U(\sigma,t)< H(\sigma)
   \end{displaymath}
   so that Lemma \ref{lemau} implies that 
   \begin{displaymath}
   \sigma< u(t).
   \end{displaymath}
   By Lemma \ref{Lmax}
   \begin{displaymath}
   u(t)\le u(\pi/\log2)
   \end{displaymath}
   and by Lemma \ref{u=E}
   \begin{displaymath}
   u(\pi/\log2)=E.
   \end{displaymath}
   It follows that $\sigma< E$.

   Therefore, the supremum $T$ of the real parts of the turning points  
   is less than or equal  to $E$.
   We have even proved a little more: On the line $\sigma=E$ there is 
   no turning point.
   \end{proof} 
   
   We will now show that there is a sequence $(b_n)$ of turning points for 
   $\zeta(s)$ such that $\lim_n \Real(b_n) = E$. This will end the proof 
   of Theorem \ref{T9.1}.
   \medskip

   By Lemma \ref{KH} there exists a sequence of real numbers $(t_k)$ 
   such that $\zeta(s+it_k)$ converges to $f(s):=\frac{2^s-1}{2^s+1}\zeta(s)$. 
   Since
   \begin{displaymath}
   f(E)=0.9\dots,\quad f'(E)=0,\quad f''(E)=0.07\dots,\quad f'''(E)=-0.17\dots
   \end{displaymath}
   $E$ is a turning point for $f(s)$.

   We are going to show that the functions 
   $\zeta(s+it_k)$ must have a turning point very near to $E$.  
   \medskip

   We prove a slightly more general result. We break the proof in several 
   lemmas.
   \medskip

   Given a holomorphic function $f$ defined on a disc with center at $0$ and
   radius $R$ we define the associated (continuous) function
   \begin{displaymath}
   h(r,\varphi)=\Imag f(re^{i\varphi})+i\Real f'(re^{i\varphi})
   \end{displaymath}
   so that $re^{i\varphi}$ will be a turning point for $f(z)$ if and only if 
   $h(r,\varphi)=0$. 
   \medskip

   For each $0<r<R$ let $\gamma_r$ be the curve  $\varphi\colon[0,2\pi)
   \mapsto h(r,\varphi)$.

   \begin{proposition}\label{P1}
   Let $f(z)=a_0+a_2z^2+a_3z^3+\cdots$ be a holomorphic function 
   on $\Delta(0,R)$  the disc with center  $0$ and radius $R$. 
   Assume that $a_0>0$, $a_2>0$ and $a_3<0$. Then there exists an 
   $r_0>0$ such that for $0<r<r_0$,  the curve $\gamma_r$ does not 
   pass through $z=0$  and the index ($\,$the winding number$\,$)
   of the curve $\gamma_r$ with respect to $0$ is $\omega(\gamma_r,0)=1$. 
   \end{proposition}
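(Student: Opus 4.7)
My plan is to Taylor-expand $h(r,\varphi)$ in powers of $r$, locate the zeros of $\Imag h(r,\cdot)$ on $[0,2\pi)$ --- i.e., the values of $\varphi$ at which $\gamma_r$ meets the real axis of the $h$-plane --- compute the sign of $\Real h$ at each, and then read off the winding number from a signed count of crossings of the positive real ray.

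With $f(z)=a_0+a_2 z^2+a_3 z^3+O(z^4)$, direct computation gives
\begin{align*}
\Real h(r,\varphi) &= \Imag f(re^{i\varphi}) = a_2 r^2\sin(2\varphi)+a_3 r^3\sin(3\varphi)+O(r^4),\\
\Imag h(r,\varphi) &= \Real f'(re^{i\varphi})= 2a_2 r\cos\varphi+3a_3 r^2\cos(2\varphi)+O(r^3),
\end{align*}
uniformly in $\varphi$.  Since $a_2>0$, the leading term $2a_2 r\cos\varphi$ of $\Imag h$ has simple zeros at $\pi/2$ and $3\pi/2$; the implicit function theorem then provides, for $r$ sufficiently small, exactly two zeros $\varphi_j(r)$ of $\Imag h(r,\cdot)$ on $[0,2\pi)$, with
\begin{displaymath}
\varphi_1(r)=\tfrac{\pi}{2}-\tfrac{3a_3}{2a_2}\,r+O(r^2),\qquad \varphi_2(r)=\tfrac{3\pi}{2}+\tfrac{3a_3}{2a_2}\,r+O(r^2).
\end{displaymath}
Substituting into $\Real h$ and collecting $r^3$-terms --- one from $a_2 r^2\sin(2\varphi_j(r))$, which is $O(r^3)$ because $\sin(2\varphi_j(r))=O(r)$, and one from $a_3 r^3\sin(3\varphi_j(r))$, using $\sin(3\varphi_1(r))\to -1$ and $\sin(3\varphi_2(r))\to 1$ --- yields
\begin{displaymath}
\Real h(r,\varphi_1(r))=2a_3 r^3+O(r^4),\qquad \Real h(r,\varphi_2(r))=-2a_3 r^3+O(r^4).
\end{displaymath}
Because $a_3<0$ the former is strictly negative and the latter strictly positive for small $r>0$.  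Hence $h(r,\varphi)\ne 0$ along $\gamma_r$, and $\gamma_r$ meets the positive real ray only at $\varphi_2(r)$; since $\partial_\varphi\Imag h(r,\varphi_2(r))=+2a_2 r+O(r^2)>0$ this crossing is upward, contributing $+1$ to the signed count, whence $\omega(\gamma_r,0)=1$.

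The main subtlety --- and the principal obstacle --- is that $\Real h$ and $\Imag h$ have different leading orders in $r$ ($r^2$ versus $r$), so the leading-order evaluation of $\Real h$ at $\varphi_j(r)$ vanishes; one has to feed the $O(r)$ correction of $\varphi_j(r)$ obtained from $\Imag h=0$ back into the expansion of $\Real h$, and it is precisely the coefficient $a_3$ that then pins down the sign.  This is why the hypothesis $a_3<0$ is essential; $a_2>0$ is used only to fix the location (and transversality) of the two leading zeros of $\Imag h$, while the assumption $a_0>0$ plays no role in the present proposition (it is needed in later applications where $\log f$ has to be defined).
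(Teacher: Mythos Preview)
Your proof is correct and rests on the same analytic ingredients as the paper's --- the Taylor expansions of $u=\Imag f$ and $v=\Real f'$ to order $r^3$ and $r^2$ respectively, and the crucial observation that the $a_3$-term determines the sign at the delicate points near $\pi/2$ and $3\pi/2$ --- but it is organised more economically. The paper locates all four zeros of $u(r,\cdot)$ and both zeros of $v(r,\cdot)$ on $[0,2\pi)$, then proves the interlacing $\alpha_3<\beta_3$ and $\beta_7<\alpha_7$ by evaluating both functions at the test points $\tfrac{\pi}{2}\pm ar$ and $\tfrac{3\pi}{2}\pm ar$ (with $a=-a_3/a_2$), and finally reads off the winding number from the full sign pattern of $(u,v)$ across the six subintervals. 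You bypass the four zeros of $u$ entirely: you solve $v=0$ to first order in $r$, substitute into $u$, and obtain the signs $2a_3 r^3$ and $-2a_3 r^3$ directly, which is exactly the information needed for the positive-ray crossing count. Your route is shorter; the paper's route gives a slightly more complete qualitative picture of the curve $\gamma_r$. Your closing remark that $a_0>0$ is not actually used in this proposition is also correct --- neither $u$ nor $v$ sees $a_0$.
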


   To prove Proposition \ref{P1} we will use some lemmas. 

   \begin{lemma}\label{LET1}
   Let $f$ be as in Proposition \ref{P1} and define 
   \begin{displaymath}
   u(r,\varphi):=\Imag f(re^{i\varphi}), \qquad 
   v(r,\varphi):=\Real f'(re^{i\varphi}).
   \end{displaymath}
   Then there exists $r_0$ such that for $0<r<r_0$,  ($r\to0$)
   \begin{align*}
   u(r,\varphi)&=a_2r^2\sin2\varphi+a_3 r^3\sin3\varphi+\Orden(r^4)\\
   v(r,\varphi)&=2a_2r\cos\varphi+3a_3r^2\cos2\varphi+\Orden(r^3)\\
   u_\varphi(r,\varphi)&=2a_2r^2\cos2\varphi+3a_3r^3\cos3\varphi+\Orden(r^4)\\
   v_\varphi(r,\varphi)&= -2a_2r\sin\varphi-6a_3r^2\sin2\varphi+\Orden(r^3)
   \end{align*}
   where the implicit constants do not depend on $\varphi$.
   \end{lemma}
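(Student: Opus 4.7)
The plan is to reduce everything to the Taylor series of $f$ around $0$ and the corresponding series for $f'$, then bound the resulting tails uniformly in $\varphi$ by the trivial observation that $|e^{in\varphi}|=1$.

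First I would fix any $r_1\in(0,R)$ and work with $0<r\le r_1$; on the closed disc $|z|\le r_1$ the series $f(z)=a_0+a_2z^2+a_3z^3+a_4z^4+\cdots$ converges absolutely, and since the lemma's hypotheses implicitly take the $a_n$ to be real (cf.\ Proposition \ref{P1}), substituting $z=re^{i\varphi}$ and taking imaginary parts gives
\begin{displaymath}
u(r,\varphi)=a_2r^2\sin2\varphi+a_3r^3\sin3\varphi+\sum_{n\ge4}a_nr^n\sin n\varphi.
\end{displaymath}
The tail is bounded in absolute value by $r^4\sum_{n\ge4}|a_n|r_1^{n-4}$, which is a finite constant independent of $\varphi$; this is the required $\Orden(r^4)$.

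For $v(r,\varphi)$ I would apply the same procedure to the derivative series $f'(z)=2a_2z+3a_3z^2+\sum_{n\ge4}na_nz^{n-1}$, which converges on the same disc; taking real parts of $f'(re^{i\varphi})$ isolates $2a_2r\cos\varphi+3a_3r^2\cos2\varphi$ with tail bounded by $r^3\sum_{n\ge4}n|a_n|r_1^{n-4}$, giving the $\Orden(r^3)$ estimate uniform in $\varphi$.

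For $u_\varphi$ and $v_\varphi$ I would differentiate the series for $u$ and $v$ term by term in $\varphi$. This is legitimate because each differentiation only multiplies the $n$-th term by a bounded factor (either $n$ or, for $v_\varphi$, again $n$), and the resulting numerical series $\sum n|a_n|r_1^{n-k}$ still converge. The leading terms then appear by inspection and the remaining tails admit the same uniform-in-$\varphi$ bounds by $|e^{in\varphi}|=1$.

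There is really no substantial obstacle here: the entire lemma is a uniform tail estimate for an absolutely convergent power series, and the only point worth verifying carefully is that in each of the four expansions the implicit constant is of the form $\sum_{n\ge N}n^{\alpha}|a_n|r_1^{n-k}$ with $\alpha\in\{0,1\}$, which depends only on $f$ and $r_1$ but not on $\varphi$. Taking $r_0:=r_1$ completes the argument.
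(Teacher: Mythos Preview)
Your proposal is correct and follows essentially the same route as the paper: expand the power series, peel off the first nontrivial terms, and bound the tail by $r^N\sum_{n\ge N}n^{\alpha}|a_n|r_0^{\,n-N}$ uniformly in $\varphi$. The only minor discrepancy is that you assume all $a_n$ are real, whereas Proposition~\ref{P1} only forces $a_0,a_2,a_3\in\R$; the paper accordingly writes the tail as $\sum_{n\ge4}r^n\Imag(a_ne^{in\varphi})$, but this changes nothing in the estimates.
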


   \begin{proof}
   Let $f(z)=\sum_{n=0}^\infty a_nz^n$ be the power series of $f$ 
   at $0$, and take $r_0$ less than the radius of convergence.
   Then 
   \begin{displaymath}
   f(z)=a_0+a_2z^2+a_3z^3+\sum_{n=4}^\infty a_n z^n
   \end{displaymath}
   so that 
   \begin{displaymath}
   u(r,\varphi)=a_2r^2\sin2\varphi+a_3 r^3\sin3\varphi+
   \sum_{n=4}^\infty r^n\Imag( a_n e^{in\varphi}) 
   \end{displaymath}
   and 
   \begin{displaymath}
   u_\varphi(r,\varphi)=2a_2r^2\cos2\varphi+3a_3 r^3\cos3\varphi+
   \sum_{n=4}^\infty r^n\Imag (in a_n e^{in\varphi}) 
   \end{displaymath}
   and for $0<r<r_0$ we will have
   \begin{multline*}
   \Bigl|\sum_{n=4}^\infty r^n \Imag(a_n e^{in\varphi})\Bigr|\le r^4
   \sum_{n=4}^\infty |a_n|r_0^{n-4},\\
   \Bigl|\sum_{n=4}^\infty r^n\Imag (in a_n e^{in\varphi})\Bigr|\le 
   r^4\sum_{n=4}^\infty n|a_n|r_0^{n-4}.
   \end{multline*}
   The  last two sums converge and this proves our lemma for 
   $u$ and $u_\varphi$.  For $v$ and $v_\varphi$ the proof is similar.
   \end{proof}

   We divide the interval $[-\frac{\pi}{8}, \frac{15\pi}{8}]$ of 
   length $2\pi$ in $8$  intervals
   \begin{gather*}
   I_1 =[-\pi/8,\pi/8],\quad 
   I_2 =[\pi/8,3\pi/8],\quad
   I_3 =[3\pi/8,5\pi/8],\\
   I_4 =[5\pi/8,7\pi/8],\quad
   I_5 =[7\pi/8,9\pi/8],\quad
   I_6 =[9\pi/8,11\pi/8],\\
   I_7 =[11\pi/8,13\pi/8],\quad
   I_8 =[13\pi/8,15\pi/8].
   \end{gather*}

   \begin{lemma}\label{L9.13}
   There exists an $r_0>0$ such that for $0<r<r_0$
   the function $u$ has exactly four zeros  on $[-\pi/8,15\pi/8]$, 
   denoted by $\alpha_1\in I_1$, $\alpha_3\in I_3$, $\alpha_5\in I_5$ 
   and $\alpha_7\in I_7$, so that $u$ is positive on $(\alpha_1,\alpha_3)$, 
   negative on $(\alpha_3,\alpha_5)$, 
   positive on $(\alpha_5,\alpha_7)$ and 
   negative on $(\alpha_7,\alpha_1+2\pi)$
   \end{lemma}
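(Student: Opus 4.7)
The plan is to use the asymptotic expansion $u(r,\varphi)=a_2 r^2\sin 2\varphi+\Orden(r^3)$ from Lemma \ref{LET1}, with the implicit constant independent of $\varphi$, so that for small $r$ the function $u$ inherits the sign pattern of the leading trigonometric polynomial $a_2\sin 2\varphi$. Since $a_2>0$ and $\sin 2\varphi$ vanishes precisely at $\varphi=0,\pi/2,\pi,3\pi/2$ within the period $[-\pi/8,15\pi/8]$, and these four points lie in the interiors of $I_1,I_3,I_5,I_7$ respectively, my target is to show that for $r<r_0$ small enough the function $u(r,\cdot)$ has exactly one zero in each odd interval and no zero in any even interval.

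For the even intervals I will use the crude lower bound $|\sin 2\varphi|\ge\sin(\pi/4)=\sqrt{2}/2$, valid throughout $I_2\cup I_4\cup I_6\cup I_8$, with alternating signs $+,-,+,-$. For $r$ small enough the error term $\Orden(r^3)$ in the expansion of $u$ is dominated by $a_2 r^2\cdot(\sqrt{2}/2)$, so $u(r,\varphi)$ is strictly positive on $I_2\cup I_6$ and strictly negative on $I_4\cup I_8$, hence non-vanishing there.

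For the odd intervals I will use the companion expansion $u_\varphi(r,\varphi)=2a_2 r^2\cos 2\varphi+\Orden(r^3)$ from Lemma \ref{LET1}. On $I_1\cup I_5$ one has $\cos 2\varphi\ge\sqrt{2}/2$ and on $I_3\cup I_7$ one has $\cos 2\varphi\le-\sqrt{2}/2$; so for $r$ small enough $u_\varphi$ is strictly positive on $I_1,I_5$ and strictly negative on $I_3,I_7$, making $u(r,\cdot)$ strictly monotone on each odd interval. Evaluating $u$ at the endpoints of these intervals (where the sign is again determined by $a_2\sin 2\varphi$, since $|\sin 2\varphi|=\sqrt{2}/2$ there) then gives a strict sign change across each odd interval, hence exactly one zero $\alpha_j\in I_j$ for $j=1,3,5,7$. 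Combining the constant-sign statements on even intervals with the monotonicity on odd intervals yields the claimed sign pattern of $u$ on $(\alpha_1,\alpha_3)$, $(\alpha_3,\alpha_5)$, $(\alpha_5,\alpha_7)$ and $(\alpha_7,\alpha_1+2\pi)$.

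There is no genuine obstacle here; the argument is a quantitative perturbation of the model function $a_2\sin 2\varphi$. The only care needed is bookkeeping — choosing a single $r_0$ that makes all eight perturbation estimates work simultaneously, which is possible because Lemma \ref{LET1} supplies $\Orden$-constants uniform in $\varphi$.
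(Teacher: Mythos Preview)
Your proposal is correct and follows essentially the same approach as the paper's proof: both use the expansions from Lemma \ref{LET1} together with the bounds $|\sin 2\varphi|\ge 2^{-1/2}$ on the even intervals and $|\cos 2\varphi|\ge 2^{-1/2}$ on the odd intervals to force the sign of $u$ on $I_2,I_4,I_6,I_8$ and the strict monotonicity of $u$ on $I_1,I_3,I_5,I_7$. The only cosmetic difference is that the paper invokes continuity from the adjacent even intervals to obtain the sign change across each odd interval, whereas you evaluate $u$ at the endpoints directly; since those endpoints are shared with the even intervals, the two arguments are the same.
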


   \begin{proof}
   By Lemma \ref{LET1}  for $r\to0$
   \begin{displaymath}
   u(r,\varphi)=a_2r^2(\sin2\varphi+\Orden(r)),\quad 
   u_\varphi(r,\varphi)=2a_2r^2(\cos2\varphi+\Orden(r)).
   \end{displaymath}
   On $I_2$ and $I_6$ $\sin2\varphi>2^{-1/2}$, whereas 
   $\sin2\varphi<-2^{-1/2}$ on $I_4$ and $I_8$.
   Then, if we take $r_0$ small enough, $u(r,\varphi)>0$ on $I_2$ and 
   $I_6$, and  $u(r,\varphi)<0$ on $I_4$ and $I_8$  (we only need to 
   take  the $\Orden(r)$ terms  less than $2^{-1/2}$).

   By continuity of $u(r,\varphi)$ this implies that for each 
   $0<r<r_0$ the function  $u(r,\varphi)$ has at least one zero 
   on each of the intervals $I_1$, $I_3$, $I_5$ and $I_7$. 
   But  $\cos2\varphi>2^{-1/2}$ on $I_1$ and $I_5$, and  
   $\cos2\varphi<-2^{-1/2}$ on $I_3$ and $I_7$, so that choosing 
   $r_0$ small enough the sign of $u_\varphi(r,\varphi)$ will be 
   negative on $I_3$ and $I_7$ and  positive on $I_1$ and $I_5$. 
   Therefore on each of these intervals the function
   $u(r,\varphi)$ is monotonic and has only one zero.
   \end{proof}

   There is an analogous result for $v(r,\varphi)$.

   \begin{lemma}
   There exists an $r_0>0$ such that for $0<r<r_0$ 
   the function $v(r,\varphi)$ has exactly two zeros for   
   $\varphi\in[-\pi/8,15\pi/8]$,  denoted by
   $\beta_3\in I_3$ and $\beta_7\in I_7$, so that
   $v(r,\varphi)$ is negative on $(\beta_3,\beta_7)$, 
   and positive on $(\beta_7,\beta_3+2\pi)$.
   \end{lemma}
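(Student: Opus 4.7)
The plan is to mirror the proof of Lemma \ref{L9.13}, now using the cosine-based expansion of $v$ in place of the sine-based expansion of $u$. By Lemma \ref{LET1}, as $r\to 0$,
\begin{displaymath}
v(r,\varphi)=2a_2 r\bigl(\cos\varphi+\Orden(r)\bigr),\qquad v_\varphi(r,\varphi)=-2a_2 r\bigl(\sin\varphi+\Orden(r)\bigr),
\end{displaymath}
with implicit constants independent of $\varphi$. Since $a_2>0$, for small enough $r$ the signs of $v$ and $v_\varphi$ are governed by those of $\cos\varphi$ and $-\sin\varphi$ respectively.

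First I would locate the intervals on which $\cos\varphi$ is bounded away from zero with a fixed sign, namely $\cos\varphi\ge\cos(3\pi/8)>0$ on $I_1\cup I_2\cup I_8$ and $\cos\varphi\le-\cos(3\pi/8)<0$ on $I_4\cup I_5\cup I_6$. Choosing $r_0$ so small that the $\Orden(r)$ correction in the expansion of $v$ is uniformly less than $\cos(3\pi/8)$ in absolute value, I obtain $v(r,\varphi)>0$ on the first union and $v(r,\varphi)<0$ on the second. The intermediate value theorem then produces at least one zero of $v(r,\cdot)$ in each of $I_3$ and $I_7$.

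Next I would establish uniqueness by strict monotonicity. Since $\sin\varphi\ge\sin(3\pi/8)>0$ throughout $I_3=[3\pi/8,5\pi/8]$ and $\sin\varphi\le-\sin(3\pi/8)<0$ throughout $I_7=[11\pi/8,13\pi/8]$, shrinking $r_0$ further if necessary makes $v_\varphi(r,\cdot)$ strictly negative on $I_3$ and strictly positive on $I_7$. Hence $v(r,\cdot)$ is strictly monotone on each of these intervals and has exactly one zero, which I name $\beta_3$ and $\beta_7$. Splicing the monotonic crossings at $\beta_3$ and $\beta_7$ with the signs already established on the six safe intervals gives $v(r,\varphi)<0$ on $(\beta_3,\beta_7)$ and $v(r,\varphi)>0$ on $(\beta_7,\beta_3+2\pi)$.

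I do not anticipate a serious obstacle: the argument is formally identical to that of Lemma \ref{L9.13}, with the roles of cosine and sine interchanged because $v$ and $v_\varphi$ have lower-order leading terms than $u$ and $u_\varphi$. The only bookkeeping is to pick a single $r_0$ that simultaneously controls the sign of $v$ on the six safe intervals and the strict monotonicity of $v$ on $I_3$ and $I_7$; this is handled by taking the minimum of the finitely many thresholds arising above.
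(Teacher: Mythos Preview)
Your proposal is correct and follows exactly the approach the paper intends: the paper's proof consists of the single sentence ``Observing that $v(r,\varphi)=2a_2r(\cos\varphi+\Orden(r))$, the proof is similar to that of Lemma \ref{L9.13},'' and you have faithfully carried out that similarity, supplying the details the authors left implicit.
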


   \begin{proof}
   Observing that $v(r,\varphi)=2a_2r(\cos\varphi+\Orden(r))$, 
   the proof is similar to that of Lemma \ref{L9.13}. 
   \end{proof}

   \begin{lemma}
   There exists an $r_0>0$ such that for $0<r<r_0$
   the zeros of $u(r,\varphi)$ and $v(r,\varphi)$ satisfy the relation
   \begin{displaymath}
   \alpha_3<\beta_3,\qquad \beta_7<\alpha_7.
   \end{displaymath}
   \end{lemma}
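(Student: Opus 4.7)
The plan is to compute the first-order-in-$r$ asymptotics of each of $\alpha_3$, $\beta_3$, $\alpha_7$, $\beta_7$ as $r\to 0$ using the expansions from Lemma \ref{LET1}, and then read off the desired inequalities from the sign conditions $a_2>0$, $a_3<0$. The previous lemmas already give monotonicity of $u(r,\cdot)$ on $I_3$, $I_7$ and of $v(r,\cdot)$ on the same intervals (for $r$ small), so each zero is determined by its leading-order Taylor expansion about the center of the interval; the strict inequalities between the zeros then follow by comparing these expansions.

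First I would localize. For $\varphi$ near $\pi/2$ one has $\sin 2\varphi\approx -2(\varphi-\pi/2)$, $\sin 3\varphi\to -1$, $\cos\varphi\approx -(\varphi-\pi/2)$ and $\cos 2\varphi\to -1$. Substituting into the expansions of Lemma \ref{LET1} gives
\begin{align*}
u(r,\varphi) &= -2a_2r^2(\varphi-\tfrac{\pi}{2}) - a_3r^3 + \Orden\!\bigl(r^4 + r^2(\varphi-\tfrac{\pi}{2})^2\bigr),\\
v(r,\varphi) &= -2a_2r(\varphi-\tfrac{\pi}{2}) - 3a_3r^2 + \Orden\!\bigl(r^3 + r(\varphi-\tfrac{\pi}{2})^2\bigr).
\end{align*}
Setting each expression to zero and inverting (using the non-vanishing leading coefficients $-2a_2 r^2$ and $-2a_2 r$, and the monotonicity already proved) yields
\begin{equation*}
\alpha_3 = \tfrac{\pi}{2} - \tfrac{a_3}{2a_2}\,r + \Orden(r^2),\qquad
\beta_3 = \tfrac{\pi}{2} - \tfrac{3a_3}{2a_2}\,r + \Orden(r^2).
\end{equation*}
Since $a_3<0<a_2$, both corrections are positive and the one for $\beta_3$ is three times the one for $\alpha_3$, so for $r$ small enough $\alpha_3<\beta_3$.

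For the interval $I_7$ I would perform the same computation about $\varphi=3\pi/2$. Here $\sin 2\varphi\approx -2(\varphi-3\pi/2)$, $\sin 3\varphi\to +1$, $\cos\varphi\approx (\varphi-3\pi/2)$, $\cos 2\varphi\to -1$, so
\begin{align*}
u(r,\varphi) &= -2a_2r^2(\varphi-\tfrac{3\pi}{2}) + a_3r^3 + \Orden(\ldots),\\
v(r,\varphi) &= \phantom{-}2a_2r(\varphi-\tfrac{3\pi}{2}) - 3a_3r^2 + \Orden(\ldots),
\end{align*}
which leads to
\begin{equation*}
\alpha_7 = \tfrac{3\pi}{2} + \tfrac{a_3}{2a_2}\,r + \Orden(r^2),\qquad
\beta_7 = \tfrac{3\pi}{2} + \tfrac{3a_3}{2a_2}\,r + \Orden(r^2).
\end{equation*}
Again $a_3<0<a_2$ gives two negative corrections with the one for $\beta_7$ strictly more negative, whence $\beta_7<\alpha_7$ for all sufficiently small $r$.

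The only real care needed is in justifying that the leading-order linearization determines the zero uniquely in each of $I_3$ and $I_7$; this is where the previous lemmas do the work, since they guarantee strict monotonicity of $u(r,\cdot)$ and $v(r,\cdot)$ on each such interval for $r$ small, so the implicit function argument (or direct inversion of a monotone function whose derivative is bounded below in absolute value) is automatic. Thus the main (and only nontrivial) computation is extracting the factor $3$ between the $r$-coefficients of $\beta_j-\varphi_j$ and $\alpha_j-\varphi_j$, and this factor is already visible directly in the expansions of Lemma \ref{LET1}: the coefficient of $a_3$ in $v$ (respectively $v_\varphi$) is $3$ times the coefficient of $a_3$ in $u$ (respectively $u_\varphi$) at the relevant point.
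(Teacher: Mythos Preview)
Your argument is correct and proceeds along the same lines as the paper's proof: both use the expansions of Lemma~\ref{LET1} near $\varphi=\pi/2$ and $\varphi=3\pi/2$, together with the monotonicity of $u(r,\cdot)$ and $v(r,\cdot)$ on $I_3$ and $I_7$ established in the preceding lemmas. The only difference is cosmetic. You compute the first-order asymptotics of the zeros themselves, obtaining (with $a:=-a_3/a_2>0$) $\alpha_3\sim\tfrac{\pi}{2}+\tfrac{a r}{2}$, $\beta_3\sim\tfrac{\pi}{2}+\tfrac{3ar}{2}$, and similarly on $I_7$, and then compare. The paper instead evaluates $u$ and $v$ at the single test point $\tfrac{\pi}{2}+ar$ (which is exactly the midpoint of your two leading-order approximations), checks that $u<0<v$ there, and invokes monotonicity to conclude $\alpha_3<\tfrac{\pi}{2}+ar<\beta_3$; the argument on $I_7$ uses the test point $\tfrac{3\pi}{2}-ar$. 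Your route yields slightly more information (the actual asymptotics of the zeros), while the paper's avoids having to justify the $\Orden(r^2)$ remainder in the inversion step.
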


   \begin{proof}
   Putting $a = -a_3/a_2>0$ we have for $0<r<r_0$ ($r_0$ small 
   enough to make  the previous lemmas valid)
   \begin{align*}
   u(r,\varphi)&=a_2r^2(\sin2\varphi-a r\sin3\varphi+\Orden(r^2)\\
   v(r,\varphi)&=2a_2r(\cos\varphi-\frac{3a}{2}r\cos2\varphi+\Orden(r^2)
   \end{align*}
   with $\Orden$-constants independent of $\varphi$. 

   The two zeros $\alpha_3$ and $\beta_3$ are on $I_3$ an interval with center 
   at $\frac{\pi}{2}$. At the point $\frac{\pi}{2}+ar$ we have
   \begin{align*}
   \frac{u(r, \pi/2+ar)}{a_2r^2}&=ar \cos(3ar)-\sin(2ar)+\Orden(r^2)\\
   \frac{v(r,\pi/2+ar)}{2a_2r}&=\frac{3ar}{2}\cos(2ar)-\sin(ar)+\Orden(r^2).
   \end{align*}
   Expanding in Taylor series we get
   \begin{align*}
   \frac{u(r, \pi/2+ar)}{a_2r^2}&=-ar+\Orden(r^2)\\
   \frac{v(r,\pi/2+ar)}{2a_2r}&=\frac{ar}{2}+\Orden(r^2).
   \end{align*}
   Choosing $r_0$ small enough we obtain $u(r, \pi/2+ar)<0<v(r, \pi/2+ar)$
   for $0<r<r_0$. 
   Since both $u(r,\varphi)$ and $v(r,\varphi)$ are decreasing on 
   this interval,  the zero of $u(r,\varphi)$ must come before 
   $\frac{\pi}{2}+ar$  and the zero of $v(r,\varphi)$ must come 
   after $\frac{\pi}{2}+ar$. 
   That is
   \begin{displaymath}
   \alpha_3<\frac{\pi}{2}+ar<\beta_3.
   \end{displaymath}

   The center of  $I_7$ is $\frac{3\pi}{2}$.  We compute the 
   functions at $\frac{3\pi}{2}-ar$. In the same way as before we find
   \begin{align*}
   \frac{u(r, 3\pi/2-ar)}{a_2r^2}&=-ar\cos(3ar)+\sin(2ar)+\Orden(r^2)
   =a r+\Orden(r^2)\\
   \frac{v(r,3\pi/2-ar)}{2a_2r}&=\frac{3ar}{2}\cos(2ar)-\sin(ar)+
   \Orden(r^2)=\frac{ar}{2}+\Orden(r^2).
   \end{align*}
   On the interval $I_7$ the function $u(r,\varphi)$ is decreasing whereas
   $v(r,\varphi)$ is increasing, so that the above computation implies that 
   for $r_0$ small enough, we will have that the zero of $u(r,\varphi)$ 
   will come after $\frac{3\pi}{2}-ar$, and that the zero of $v(r,\varphi)$ 
   will come before this value. 
   That is
   \begin{displaymath}
   \beta_7<\frac{3\pi}{2}-a r<\alpha_7.
   \end{displaymath}
   \end{proof}

   \begin{proof}[Proof of Proposition \ref{P1}]
   Taking $r_0$ small enough all  previous lemmas will apply. 
   We have seen that  the zeros of $u(r,\varphi)$ and $v(r,\varphi)$ 
   satisfy 
   \begin{displaymath}
   \alpha_1 < \alpha_3<\beta_3<\alpha_5<\beta_7<\alpha_7<\alpha_1+2\pi
   \end{displaymath}
   so that in particular these functions do not vanish simultaneously. 
   Therefore,  the curve $\gamma_r$ with equation 
   \begin{displaymath}
   \varphi\mapsto h(r,\varphi)=u(r,\varphi)+iv(r\varphi)
   \end{displaymath}
   does not pass through $z=0$. 

   Since we know the sign of $u$ and $v$ on the intervals limited by 
   the above zeros, we easily compute  the index $\omega(\gamma_r,0)=1$.
   \end{proof}

   \begin{theorem}
   Let $f$ be a holomorphic function in the conditions of 
   Proposition \ref{P1}.
   Let $(f_n)$ be a sequence of holomorphic functions on the 
   disc where $f$ is defined and converging uniformly to $f$ on 
   compact sets of this disc. 
   Then there exist $n_0$ and a sequence $(b_n)$ of complex numbers 
   such that for $n\ge n_0$,   $b_n$ is a turning point of $f_n$ and 
   $\lim_n b_n =0$.
   \end{theorem}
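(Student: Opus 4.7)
The plan is to combine Proposition \ref{P1} with a real two-dimensional degree argument. A turning point of $f_n$ is a zero of the continuous map $\Phi_n\colon\Delta(0,R)\to\C$ defined by $\Phi_n(z):=\Imag f_n(z)+i\Real f_n'(z)$; note that $\Phi_n$ is \emph{not} holomorphic in $z$, so Hurwitz's theorem is not directly available. The restriction of $\Phi_n$ to the circle of radius $r$ is precisely the curve $\gamma_r^{(n)}\colon\varphi\mapsto h_n(r,\varphi)$ associated with $f_n$ in the sense of Proposition \ref{P1}.

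First I would invoke the standard fact that uniform convergence of holomorphic functions on compacta implies uniform convergence of all derivatives on compacta (e.g.\ via Cauchy's integral formula). Hence $f_n'\to f'$ uniformly on compact subsets of $\Delta(0,R)$, and for each fixed $r$ the curves $\gamma_r^{(n)}$ converge uniformly in $\varphi\in[0,2\pi]$ to $\gamma_r$.

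Next I would fix $r$ strictly less than the $r_0$ of Proposition \ref{P1}, so that $\gamma_r$ avoids $0$ and satisfies $\omega(\gamma_r,0)=1$. Setting $\delta:=\min_{\varphi}|h(r,\varphi)|>0$, the uniform convergence gives an index $n_0=n_0(r)$ such that $|\Phi_n(re^{i\varphi})-h(r,\varphi)|<\delta/2$ for all $\varphi$ and all $n\ge n_0$. Then $\gamma_r^{(n)}$ also avoids $0$, and the straight-line homotopy from $\gamma_r^{(n)}$ to $\gamma_r$ stays away from $0$, so $\omega(\gamma_r^{(n)},0)=1$. A standard argument (if $\Phi_n$ were non-vanishing on the closed disc, then $\Phi_n/|\Phi_n|$ would give a continuous extension of $\varphi\mapsto\gamma_r^{(n)}(\varphi)/|\gamma_r^{(n)}(\varphi)|$ to $\overline{\Delta(0,r)}$, forcing the winding number to be $0$) then shows that $\Phi_n$ has at least one zero $b_n^{(r)}\in\Delta(0,r)$; this point is a turning point of $f_n$.

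Finally, to extract a single sequence $b_n\to 0$, I would pick a decreasing sequence $r_k\searrow 0$ with $r_k<r_0$, obtain from the previous step indices $N_1<N_2<\cdots$ such that for $n\ge N_k$ the function $f_n$ has a turning point in $\Delta(0,r_k)$, and for each $n\ge N_1$ define $b_n$ to be such a turning point for the largest $k$ with $N_k\le n$. Then $|b_n|\le r_k\to 0$. The only subtle step is the passage from the winding number on the boundary to the existence of an interior zero; since $\Phi_n$ is merely continuous this is a topological (Brouwer degree) fact rather than an application of the argument principle, and this is the conceptual heart of the theorem.
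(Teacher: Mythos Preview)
Your proposal is correct and follows essentially the same route as the paper: use Proposition~\ref{P1} to get $\omega(\gamma_r,0)=1$, transfer this to $\gamma_r^{(n)}$ by uniform convergence of $f_n$ and $f_n'$, deduce a zero of $\Phi_n$ inside the disc by a topological degree argument, and finish with a diagonal extraction along radii $r_k\searrow 0$. The only cosmetic difference is in the topological step: the paper argues that the family $r\mapsto h_n(r,\varphi)$ for $0\le r\le r_k$ would be a null-homotopy of $\gamma_{r_k}^{(n)}$ in $\C\smallsetminus\{0\}$ if $\Phi_n$ did not vanish, whereas you phrase the same obstruction as ``$\Phi_n/|\Phi_n|$ would extend the boundary map to the closed disc, forcing winding number $0$''. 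These are equivalent formulations of the same Brouwer-degree fact, and your identification of this as the conceptual heart (since $\Phi_n$ is not holomorphic and Hurwitz does not apply) matches the paper's reasoning exactly.
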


   \begin{proof}
   Let $r_0$ be small enough to make all previous lemmas applicable to $f$. 
   Put $u_n(r,\varphi):=\Imag f_n(re^{i\varphi})$ and $v_n(r,\varphi)=
   \Real f_n'(re^{i\varphi})$. The uniform convergence implies that  
   for each \mbox{$0<r<r_0$}, $\lim_n u_n(r,\varphi)=u(r,\varphi)$ and 
   $\lim_n v_n(r,\varphi)=v(r,\varphi)$ uniformly in $\varphi$. Finally put \break
   $h_n(r,\varphi):=u_n(r,\varphi)+iv_n(r,\varphi)$.

   Let $(r_n)$ be a decreasing sequence  of real numbers with $0<r_n<r_0$ and 
   $\lim_nr_n=0$.

   In  Proposition \ref{P1} $h(r_n,\varphi)$ does not vanish. Since it is 
   continuous there exists a $\delta_n>0$ such that 
   $|h(r_n, \varphi)|>\delta_n$ for all $\varphi$. 
   By the uniform convergence there exists $N_n$ such that 
   $|h(r_n, \varphi)-h_m(r_n, \varphi)|<\delta_n$
   for each $m\ge N_n$ and all $\varphi$. 

   Let $\gamma_n$ be the curve  $\varphi\mapsto h(r_n, \varphi)$.
   We have seen in Proposition \ref{P1} that $\omega(\gamma_n,0)=1$. 
   Let $\gamma_n^{(m)}$ be  the curve  $\varphi\mapsto h_m(r_n, \varphi)$. 
   Since 
   \begin{displaymath}
   |h(r_n, \varphi)-h_m(r_n, \varphi)|<\delta_n<|h(r_n, \varphi)|,
   \qquad  (m\ge N_n)
   \end{displaymath}
   we find that $\omega(\gamma^{(m)}_n,0)=\omega(\gamma_n,0)=1$. 

   Since $\omega(\gamma_n^{(m)},0)=1$ there is no homotopy of the curve 
   to a point in  $\C\smallsetminus \{0\}$. The equation of this 
   curve is 
   \begin{displaymath}
   \varphi\mapsto h_m(r_n,\varphi).
   \end{displaymath}
   The curves $\varphi\mapsto h_m(r,\varphi)$ for $0\le r\le r_n$ 
   will be a homotopy of $\gamma_n^{(m)}$ to the point $h_m(0,\varphi)$ 
   if this function does not vanish for 
   $(r,\varphi)\in[0,r_0]\times[0,2\pi]$. It follows that there is 
   a point with  $h_m(r,\varphi)=0$. This makes $b_{n,m}:=re^{i\varphi}$ 
   a turning point of $f_m$ with  $|b_{n,m}|\le r_n$

   For each $n$ we have found $N_n$ such that for  $m\ge N_n$ 
   there exists a turning point $b_{n,m}$ of $f_m$ with $|b_{n,m}|< r_n$. 
   It is clear that we may take  $N_1<N_2<N_3<\cdots $. 

   Now define for $N_k\le m<N_{k+1}$ the point $b_m:=b_{k,m}$.
   This is a sequence defined for $m\ge N_1$. 

   The sequence $(b_m)$ satisfies our theorem.  Indeed, by construction  
   $b_m$ is a turning point for $f_m$ and  for each $m$ there is a 
   $k$ with  $|b_m|=|b_{k,m}|<r_k$  where $N_k\le m<N_{k+1}$. Hence 
   for $m>N_k$ we will have $|b_m|<r_j\le r_k$, so that $\lim b_m = 0$.
   \end{proof}
   
   Now we can prove the last part of Theorem \ref{T9.1}:
   \emph{There is a sequence $(b_n)$ of turning points for 
   $\zeta(s)$ with $\lim_{n\to\infty} \Real(b_n)= E$.}

   \begin{proof}[Proof of the second half of Theorem \ref{T9.1}]
   Let $g(s):=\frac{2^s-1}{2^s+1}\zeta(s)$, and define $f(s)=g(s+E)$. 
   We then have $f(0)=0.933\dots$, $f'(0)=0$,  $f''(0)=0.070\dots$, 
   $f'''(0)=-0.178\dots$.

   By Lemma  \ref{KH}  there exists a sequence $(t_n)$ of real numbers with 
   \begin{displaymath}
   \lim_{n\to\infty}\zeta(s+it_n) = g(s)=f(s-E)
   \end{displaymath}
   uniformly on compact sets of $\sigma>1$.

   It follows that the functions $\zeta(s+E+it_n)$ converge to 
   $f(s)$ uniformly on the  disc with center $0$ and radius $E-1$. 

   By Theorem \ref{P1} there exists a sequence $(c_n)$ such that 
   $c_n$ is a turning point of $\zeta(s+E+it_n)$ and $\lim_n c_n=0$.

   Put $b_n = c_n+E+it_n$. It is clear that $b_n$ is a turning 
   point of $\zeta(s)$ and
   \begin{multline*}
   \lim_{n\to\infty}\Real (b_n)=\lim_{n\to\infty}\Real (c_n+E+it_n)=
   \lim_{n\to\infty}\Real (c_n+E)=\\
   =E+\lim_{n\to\infty}\Real (c_n)=E+\Real(
   \lim_{n\to\infty} c_n)=E.
   \end{multline*}
   \end{proof}

\end{document}